\DeclarePairedDelimiter{\ceil}{\lceil}{\rceil}
\newcommand{\prlsection}[1]{{\bf{#1}.}}
\newcommand{\norm}[1]{\left\lVert#1\right\rVert}
\newcommand{\E}[1]{\,{\mathds E}\!\left[#1\right]} 
\DeclareMathOperator{\st}{s.t.}
\newcommand{\drv}{\ensuremath{\mathrm{d}}}
\newcommand{\inprod}[2]{\ensuremath{\left\langle{#1}\vphantom{\big|},\vphantom{\big|}{#2}\right\rangle}}
\newcommand{\transp}{\ensuremath{^{\scriptscriptstyle{\top}}}}
\newcommand{\A}{\mathbb{A}}
\newcommand{\AAA}{\mathcal{A}}
\newcommand{\K}{\mathbb{K}}
\newcommand{\N}{\mathbb{N}}
\newcommand{\M}{\mathbb{M}}
\newcommand{\proj}{\Pi}
\newcommand{\opt}{^\star}
\newcommand{\MM}{\mathbb{M}}
\newcommand{\Borelsigalg}[1]{\ensuremath{\mathcal{B}\!\left(#1\right)}}
\newcommand{\KL}[2]{\ensuremath{\mathsf{D}\hspace{0.5mm}{\vphantom{T}}\!\!\left ({#1}\vphantom{\big|}\vert \hspace{-0.4mm} \vert \vphantom{\big|}{#2}\right)}}
\newcommand{\R}{\ensuremath{\mathbb{R}}}
\newcommand{\Rsp}{\ensuremath{\R_{> 0}}}
\newcommand{\XX}{\mathbb{X}}
\newcommand{\X}{\mathbb{X}}
\newcommand{\Lp}[1]{\mathrm{L}^{#1}}
\DeclareMathOperator{\T}{\mathbb{T}}
\DeclareMathOperator{\yeta}{y^\star_\zeta}
\newtheorem{myass}{Assumption} 
\begin{document}

\title{Generalized maximum entropy estimation}

\author{\name Tobias Sutter \email tobias.sutter@epfl.ch \\
       \addr Risk Analytics and Optimization Chair \\
       EPFL, Switzerland
       \AND
       \name David Sutter \email suttedav@phys.ethz.ch \\
       \addr Institute for Theoretical Physics\\
       ETH Zurich, Switzerland
        \AND
       \name Peyman Mohajerin Esfahani \email P.MohajerinEsfahani@tudelft.nl \\
       \addr Delft Center for Systems and Control\\
       TU Delft, The Netherlands
       \AND
       \name John Lygeros \email lygeros@control.ee.ethz.ch \\
       \addr Department of Electrical Engineering and Information Technology\\
       ETH Zurich, Switzerland}

\editor{Benjamin Recht}

\maketitle

\begin{abstract}
We consider the problem of estimating a probability distribution that maximizes the entropy while satisfying a finite number of moment constraints, possibly corrupted by noise. Based on duality of convex programming, we present a novel approximation scheme using a smoothed fast gradient method that is equipped with explicit bounds on the approximation error. We further demonstrate how the presented scheme can be used for approximating the chemical master equation through the zero-information moment closure method, and for an approximate dynamic programming approach in the context of constrained Markov decision processes with uncountable state and action spaces.
\end{abstract}

\vspace{2mm}

\begin{keywords}
Entropy maximization, convex optimization, relative entropy minimization, fast gradient method, approximate dynamic programming
\end{keywords}

\section{Introduction}

This article investigates the problem of estimating an unknown probability distribution given a finite number of observed moments that might be corrupted by noise. Given that the observed moments are consistent, i.e., there exists a probability distribution that satisfies all the moment constraints, the problem is underdetermined and has infinitely many solutions. This raises the question of which solution to choose. A natural choice would be to pick the one with the highest entropy, called the \emph{MaxEnt distribution}. The main reason why the MaxEnt distribution is a natural choice is due to a concentration phenomenon described by~\cite{ref:Jaynes-03}:

\vspace{2mm}
\begin{minipage}{36.0em}
``\emph{If the information incorporated into the maximum-entropy analysis includes all the constraints actually operating in the random experiment, then the distribution predicted by maximum entropy is overwhelmingly the most likely to be observed experimentally.}'' 
\end{minipage}
\vspace{2mm}

\noindent See~\cite{ref:Jaynes-03, ref:Grunwald-08} for a rigorous statement. This maximum entropy estimation problem subject to moment constraints, also known as the \emph{principle of maximum entropy}, is applicable to large classes of problems in natural and social sciences --- in particular in economics, see~\cite{ref:Golan-08} for a comprehensive survey.
Furthermore it has important applications in approximation methods to dynamical objects, such as in systems biology, where MaxEnt distributions are key objects in the so-called \emph{moment closure method} to approximate the chemical master equation~\cite{ref:Smadbeck-13}, or more recently in the context of approximating dynamic programming \cite{ref:MohSut-17} where MaxEnt distributions act as a regularizer, leading to computationally more efficient optimization programs.

 Their operational significance motivates the study of numerical methods to compute MaxEnt distributions, which are the solutions of an infinite-dimensional convex optimization problem and as such computationally intractable in general. Since it was shown that the MaxEnt distribution subject to a finite number of moment constraints (if it exists) belongs to the exponential family of distributions~\cite{ref:Csiszar-75}, its computation can be reduced to solving a system of nonlinear equations, whose dimension is equal to the number of moment constraints \cite{ref:Mead-84}. Furthermore, the system of nonlinear equations involves evaluating integrals over the support set $\K$ that are computationally difficult in general. Even if  $\K$ is finite, finding the MaxEnt distribution is not straightforward, since solving a system of nonlinear equations can be computationally demanding. 

In this article, we present a new approximation scheme to minimize the relative entropy subject to noisy moment constraints. This is a generalization of the introduced maximum entropy problem and extends the principle of maximum entropy to the so-called \emph{principle of minimum discriminating information}~\cite{ref:Kullback-59}. We show that its dual problem exhibits a particular favorable structure that allows us to apply Nesterov's smoothing method~\cite{nesterov05} and hence tackle the presented problem using a fast gradient method
obtaining process convergence properties, unlike \cite{ref:Lasserre-11}.

Computing the MaxEnt distribution has applications in randomized rounding and the design of approximation algorithms. More precisely, it has been shown how to improve the approximation ratio for the symmetric and the asymmetric traveling salesman problem via MaxEnt distributions~\cite{asadpour10,oveis11}. Often, it is important to efficiently compute the MaxEnt distribution. For example, the zero-information moment closure method~\cite{ref:Smadbeck-13} (see Section~\ref{sec:moment:closure}), a recent approximate dynamic programming method for constrained Markov decision processes (see Section~\ref{sec:ADP}), as well as the approximation of the channel capacity of a large class of memoryless channels~\cite{TobiasSutter15} deal with iterative algorithms that require the numerical computation of the MaxEnt distribution in each iteration step.

\vspace{2mm}
\prlsection{Related results} Before comparing the approach presented in this article with existing methods we provide a brief digression on the moment problem. 
Consider a one-dimensional moment problem formulated as follows: Given a set $\K\subset \R$ and a sequence $(y_i)_{i\in\N}\subset\R$ of moments, does there exist a measure $\mu$ supported on $\K$ such that
\begin{equation} \label{eq:moment:intro}
y_i = \int_\K x^i \mu(\drv x) \quad \text{for all } i\in\N \ ?
\end{equation}
For $\K=\R$ and $\K=[a,b]$ with $-\infty<a<b<\infty$ the above moment problem is known as the \emph{Hamburger moment problem} and \emph{Hausdorff moment problem}, respectively. If the moment sequence is finite, the problem is called a \emph{truncated moment problem}. In both full and truncated cases, a measure $\mu$ that satisfies \eqref{eq:moment:intro}, is called a \emph{representing measure} of the sequence $(y_i)_{i\in\N}$. If a representing measure is unique, it is said to be \emph{determined by its moments}.
From the Stone-Weierstrass theorem it followes directly that every non-truncated representing measure with compact support is determined by its moments.
In the Hamburger moment problem, given a representing measure $\mu$ for a moment sequence $(y_i)_{i\in\N}$, a sufficient condition for $\mu$ being determined by its moments is the so-called \emph{Carleman condition}, i.e.,  $\sum_{i=1}^{\infty}y_{2i}^{-\nicefrac{1}{2i}}=\infty$. Roughly speaking this says that the moments should not grow too fast, see~\cite{ref:Akhiezer-65} for further details. For the Hamburger and the Hausdorff moment problem, there are necessary and sufficient conditions for the existence of a representing measure for a given moment sequence $(y_i)_{i\in\N}$ in both the full as well as the truncated setting, that exploit the rich algebraic connection with Hankel matrices see~\cite[Theorems~3.2, 3.3, 3.4]{ref:Lasserre-11}.

In \cite[Section~12.3]{ref:Lasserre-11} it is shown that the maximum entropy subject to finite moment constraints can be approximated by using duality of convex programming. The problem can be reduced to an unconstrained finite-dimensional convex optimization problem and an approximation hierarchy of its gradient and Hessian in terms of two single semidefinite programs involving two linear matrix inequalities is presented. The desired accuracy is controlled by the size of the linear matrix inequalities. The method seems to be powerful in practice, however a rate of convergence
has not been proven. Furthermore, it is not clear how the method extends to the case of uncertain moment constraints. 
In a finite dimensional setting, \cite{ref:Dudik-07} presents a treatment of the maximum entropy principle with generalized regularization measures, that as a special case contains the setting presented here. However, convergence rates of algorithms presented are not known and again it is not clear how the method extends to the case of uncertain moment constraints. The discrete case, where the support set $\K$ is discrete, has been studied in more detail in the past. It has been shown the the maximum entropy problem in the discrete case has a succinct description that is polynomial-size in the input and can be efficiently computed~\cite{singh14,straszak17}. Furthermore it was shown that the maximum entropy problem is equivalent to the counting problem~\cite{singh14}.

\vspace{2mm}
\prlsection{Structure} The layout of this paper is as follows: In Section~\ref{sec:problem:statement} we formally introduce the problem setting. Our results on an approximation scheme in a continuous setting are reported in Section~\ref{sec:rate:distortion}. In Section~\ref{sec:finite:dim:case}, we show how these results simplify in the finite-dimensional case. Section~\ref{sec:gradient:evalutaion} discusses the gradient approximation that is the dominant step of the proposed approximation method from a computational perspective. The theoretical results are applied in Section~\ref{sec:moment:closure} to the zero-information moment closure method and in Section~\ref{sec:ADP} to constrained Markov decision processes. We conclude in Section~\ref{sec:conclusion} with a summary of our work and comment on possible subjects of further research.

\vspace{2mm}
\prlsection{Notation}
The logarithm with basis 2 and $\mathrm{e}$ is denoted by $\log(\cdot)$ and $\ln(\cdot)$, respectively. We define the standard $n-$simplex as $\Delta_{n}:=\{  x\in\R^{n} : x\geq 0, \sum_{i=1}^{n} x_{i}=1\}$. For a probability mass function $p \in \Delta_{n}$ we denote its entropy by $H(p):=\sum_{i=1}^n -p_i \log p_i$. 
Let $B(y,r):=\{x\in\R^n \ : \ \|x-y\|_2 \leq r \}$ denote the ball with radius $r$ centered at $y$. Throughout this article, measurability always refers to Borel measurability.
For a probability density $p$ supported on a measurable set $B\subset \R$ we denote the differential entropy by $h(p):=-\int_{B} p(x) \log p(x) \drv x$. For $\A\subset\R$ and $1\leq p \leq \infty$, let $\Lp{p}(\A)$ denote the space of $\Lp{p}$-functions on the measure space $(\A, \Borelsigalg{\A}\!, \drv x)$, where $\Borelsigalg{\A}$ denotes the Borel $\sigma$-algebra and $\drv x$ the Lebesgue measure.  
Let $\XX$ be a compact metric space, equipped with its  Borel $\sigma$-field $\mathcal{B}(\cdot)$. The space of all probability measures on $(\XX, \mathcal{B}(\XX))$ will be denoted by $\mathcal{P}(\XX)$. The \emph{relative entropy} (or
Kullback-Leibler divergence) between any two probability measures $\mu, \nu \in \mathcal{P}(\XX)$ is defined by
\begin{equation*}
\KL{\mu}{\nu} := \left\{ \begin{array}{ll}
\int_\XX \log\left( \frac{\drv\mu}{\drv \nu} \right) \drv\mu, &\text{if } \mu \ll \nu\\
+\infty, & \text{otherwise} \, ,
\end{array} \right.
\end{equation*}
where $\ll$ denotes absolute continuity of measures, and $\tfrac{\drv\mu}{\drv \nu}$ is the Radon-Nikodym derivative. The relative entropy is non-negative, and is equal to zero if and only if $\mu\equiv \nu$.
Let $\XX$ be restricted to a compact metric space and let us consider the pair of vector spaces
$(\M(\XX),\mathbb{B}(\XX))$ where $\M(\XX)$ denotes the space of finite signed measures on $\mathcal{B}(\XX)$ and $\mathbb{B}(\XX)$ is the Banach space of bounded measurable functions on $\XX$ with respect to the sup-norm and consider the bilinear form
\begin{align*}
\inprod{\mu}{f}:=\int_\XX f(x)\mu(\drv x).
\end{align*}
 This induces the total variation norm as the dual norm on $\M(\XX)$, since by~\cite[p.2]{ref:Hernandez-99}
\begin{equation*}
\| \mu \|_* = \sup_{\|f\|_\infty \leq 1}\inprod{\mu}{f} = \|\mu\|_{\mathsf{TV}},
\end{equation*}
making $\M(\XX)$ a Banach space.
In the light of~\cite[p.~206]{ref:Hernandez-99} this is a dual pair of Banach spaces; we refer to~\cite[Section~3]{anderson87} for the details of the definition of dual pairs. The Lipschitz norm is defined as $\|u\|_L := \sup_{x,x'\in \X} \{ |u(x)|, \frac{|u(x)-u(x')|}{\|x-x'\|_\infty}\}$ and $\mathcal{L}(\X)$ denotes the space of Lipschitz functions on $\X$.

\section{Problem Statement}  \label{sec:problem:statement}

Let $\K \subset \R$ be compact and consider the scenario where a probability measure $\mu \in \mathcal{P}(\K)$ is unknown and only observed via the following measurement model
\begin{equation} \label{eq:measurement:model}
y_{i} = \inprod{\mu}{x^{i}} + u_{i}, \quad u_{i} \in \mathcal{U}_{i} \quad \text{for }i=1,\hdots,M \, ,
\end{equation}
where $u_{i}$ represents the uncertainty of the obtained data point $y_{i}$ and $\mathcal{U}_{i}\subset\R$ is compact, convex and $0\in\mathcal{U}_{i}$ for all $i=1,\hdots,M$.
Given the data $(y_{i})_{i=1}^M\subset \R$, the goal is to estimate a probability measure $\mu$ that is consistent with the measurement model \eqref{eq:measurement:model}. This problem (given that $M$ is finite) is underdetermined and has infinitely many solutions. Among all possible solutions for \eqref{eq:measurement:model}, we aim to find the solution that maximizes the entropy.
Define the set $T:=\times_{i=1}^{M} \{y_{i}-u  :  u\in \mathcal{U}_{i} \}\subset \R^{M}$ and the linear operator $\mathcal{A}:\MM(\K)\to \R^{M}$ by 
\begin{align*}
(\mathcal{A}\mu)_{i}:=\inprod{\mu}{x^{i}} = \int_{\K}x^{i}\mu(\drv x) \quad \text{for all} \quad i=1,\hdots, M\, .
\end{align*}
The operator norm is defined as $\norm{\mathcal{A}}:=\sup_{\norm{\mu}_{\mathsf{TV}}=1, \norm{y}_2=1} \inprod{\mathcal{A}\mu}{y}$. Note that due to the compactness of $\K$ the operator norm is bounded, see Lemma~\ref{lem:lipschitz:cts:gradient} for a formal statement.
The adjoint operator to $\mathcal{A}$ is given by $\mathcal{A}^{*}:\R^{M}\to\mathbb{B}(\K)$, where $\mathcal{A}^{*}z (x):=\sum_{i=1}^{M}z_{i}x^{i}$; note that the domain and image spaces of the adjoint operator are well defined as $(\mathbb{B}(\K),\MM(\K))$ is a topological dual pairs and the operator $\mathcal{A}$ is bounded \cite[Proposition 12.2.5]{ref:Hernandez-99}. 

Given a reference measure $\nu \in \mathcal{P}(\K)$, the problem of minimizing the relative entropy subject to moment constraints \eqref{eq:measurement:model} can be formally described by
 \begin{align} \label{eq:main:problem}
 	       \quad J^{\star}= \min\limits_{\mu\in\mathcal{P}(\K)} \left\{ \KL{\mu}{\nu} \ : \ \mathcal{A}\mu \in T \right\}.
 	\end{align}
We note that the reference measure $\nu \in \mathcal{P}(\K)$ is always fixed a priori. A typical choice for $\nu$ is the uniform measure over $\K$.

	\begin{proposition}[Existence \& uniqueness of \eqref{eq:main:problem}] \label{prop:solvability}
	The optimization problem~\eqref{eq:main:problem} attains an optimal feasible solution that is unique.
	\end{proposition}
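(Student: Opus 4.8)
The plan is to prove existence by the direct method of the calculus of variations and uniqueness by strict convexity. First I would fix the topology: equip $\mathcal{P}(\K)$ with the weak (narrow) topology $\sigma(\MM(\K),C(\K))$ induced by the continuous functions on the compact set $\K$. Since $\K$ is compact, $\mathcal{P}(\K)$ is weakly compact by the Banach--Alaoglu theorem (equivalently Prokhorov's theorem). The feasible set is nonempty because the data-generating measure in the measurement model \eqref{eq:measurement:model} satisfies $(\mathcal{A}\mu)_i = y_i - u_i$ with $u_i\in\mathcal{U}_i$, so that $\mathcal{A}\mu\in T$.

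Next I would show that the feasible set $F := \{\mu\in\mathcal{P}(\K) : \mathcal{A}\mu\in T\}$ is weakly compact. Each monomial $x\mapsto x^{i}$ is continuous on the compact set $\K$, so the linear operator $\mathcal{A}$ is continuous from $(\MM(\K),\sigma(\MM(\K),C(\K)))$ into $\R^{M}$. The set $T=\times_{i=1}^{M}\{y_i-u:u\in\mathcal{U}_i\}$ is a product of compact sets, hence compact and in particular closed; therefore $F=\mathcal{A}^{-1}(T)\cap\mathcal{P}(\K)$ is a closed subset of the weakly compact set $\mathcal{P}(\K)$ and thus itself weakly compact. For the objective I would invoke lower semicontinuity of relative entropy through the Donsker--Varadhan variational representation $\KL{\mu}{\nu}=\sup_{g\in C(\K)}\{\inprod{\mu}{g}-\log\inprod{\nu}{e^{g}}\}$, which exhibits $\mu\mapsto\KL{\mu}{\nu}$ as a pointwise supremum of weakly continuous affine functionals and hence as weakly lower semicontinuous. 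Existence of a minimizer then follows from the Weierstrass theorem, since a lower semicontinuous function attains its infimum on a nonempty compact set.

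Finally, uniqueness follows from strict convexity. The set $F$ is convex, being the intersection of the convex set $\mathcal{P}(\K)$ with the preimage $\mathcal{A}^{-1}(T)$ of the convex set $T$ under the linear map $\mathcal{A}$. On its effective domain the functional $\mu\mapsto\KL{\mu}{\nu}$ is strictly convex: for distinct $\mu_0,\mu_1\ll\nu$ the densities $f_0\neq f_1$ differ on a set of positive $\nu$-measure, and the strict convexity of $t\mapsto t\log t$ yields a strict inequality along the segment $\mu_t=(1-t)\mu_0+t\mu_1$. Provided the infimum is finite --- which holds whenever some feasible measure is absolutely continuous with respect to $\nu$, as guaranteed by the consistency of the data --- every minimizer lies in this effective domain, and a strictly convex functional admits at most one minimizer over a convex set; together with existence this gives the claim.

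The main obstacle is the topological bookkeeping: one must work in the coarser narrow topology $\sigma(\MM(\K),C(\K))$ rather than the finer $\sigma(\MM(\K),\mathbb{B}(\K))$ used elsewhere in the paper, since only the former keeps $\mathcal{P}(\K)$ compact, and one must then check that both $\mathcal{A}$ and the variational representation of relative entropy remain well behaved in this topology (which they do because the test functions $x^{i}$ and the functions $g$ are continuous on the compact set $\K$). The remaining subtlety is ensuring $J^{\star}<\infty$ so that uniqueness is not vacuous.
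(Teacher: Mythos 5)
Your proof is correct and follows essentially the same route as the paper's: weak compactness of $\mathcal{P}(\K)$, lower semicontinuity of $\mu\mapsto\KL{\mu}{\nu}$ via its variational representation, continuity of the bounded linear operator $\mathcal{A}$ to conclude compactness of the feasible set and hence existence, and strict convexity of the relative entropy for uniqueness. The additional care you take --- pinning down the narrow topology in which $\mathcal{P}(\K)$ is actually compact, verifying nonemptiness of the feasible set from the measurement model, and flagging that uniqueness requires $J^{\star}<\infty$ --- makes explicit points the paper's shorter proof leaves implicit, but does not change the argument.
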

\begin{proof}
The variational representation of the relative entropy~\cite[Corollary~4.15]{ref:boucheron-13} implies that the mapping $\mu \mapsto \KL{\mu}{\nu}$  is lower-semicontinuous~\cite{ref:Luenberger-69}.
Note also that the space of probability measures on $\K$ is compact~\cite[Theorem~15.11]{ref:aliprantis-07}. Moreover, since the linear operator $\AAA$ is bounded, it is continuous. As a result, the feasible set of problem \eqref{eq:main:problem} is compact and hence the optimization problem attains an optimal solution. Finally, the strict convexity of the relative entropy~\cite{ref:Csiszar-75} ensures uniqueness of the optimizer.
\end{proof}

Note that if $\mathcal{U}_i=\{0\}$ for all $i=1,\ldots,M$, i.e., there is no uncertainty in the measurement model~\eqref{eq:measurement:model}, Proposition~\ref{prop:solvability} reduces to a known result~\cite{ref:Csiszar-75}. Consider the special case where the reference measure $\nu$ is the uniform measure on $\K$ and let $p$ denote the Radon-Nikodym derivative $\tfrac{\drv \mu}{\drv \nu}$ (whose existence can be assumed without loss of generality). Since $\mathcal{A}$ is weakly continuous and the differential entropy is known to be weakly lower semi-continuous \cite{ref:boucheron-13}, we can restrict attention to a (weakly) dense subset of the feasible set and hence assume without loss of generality that $p\in\Lp{1}(\K)$.
Problem \eqref{eq:main:problem} then reduces to
 \begin{align} \label{eq:main:problem:special case}
				\max\limits_{p\in\Lp{1}(\K)}	\left \lbrace h(p) \, : \,  \int_{\K} p(x) \drv x = 1, \ \int_{\K} x^i p(x) \drv x \in T_i, \ \forall i=1,\hdots,M \right \rbrace . 
 	\end{align} 
Problem \eqref{eq:main:problem:special case} is a generalized maximum entropy estimation problem that, in case $\mathcal{U}_i=\{0\}$ for all $i=1,\hdots,M$, simplifies to the standard entropy maximization problem subject to $M$ moment constraints.
In this article, we present a new approach to solve \eqref{eq:main:problem} that is based on its dual formulation. It turns out that the dual problem of \eqref{eq:main:problem} has a particular structure that allows us to apply Nesterov's smoothing method~\cite{nesterov05} to accelerate convergence. Furthermore, we will show how an $\varepsilon$-optimal solution to \eqref{eq:main:problem} can be reconstructed. This is done by solving the dual problem of \eqref{eq:main:problem}. To achieve additional feasibility guarantees for the $\varepsilon$-optimal solution to \eqref{eq:main:problem}, we introduce and a second smoothing step that is motivated by~\cite{ref:devolder-12}. The problem of entropy maximization subject to uncertain moment constraints \eqref{eq:main:problem:special case} can be seen as a special case of \eqref{eq:main:problem}.

\section{Relative entropy minimization}  \label{sec:rate:distortion}
We start by recalling that an unconstrained minimization of the relative entropy with an additional linear term in the cost admits a closed form solution. Let $c\in\mathbb{B}(\K)$, $\nu\in\mathcal{P}(\K)$ and consider the optimization problem
\begin{equation} \label{eq:modified:entropy:max}
\min\limits_{\mu\in\mathcal{P}(\K)}  \left \{ \KL{\mu}{\nu} - \inprod{\mu}{c} \right\}.
\end{equation}
\begin{lemma}[Gibbs distribution] \label{lem:entropy:max}
The unique optimizer to problem \eqref{eq:modified:entropy:max} is given by the Gibbs distribution, i.e.,
\begin{equation*}
\mu^{\star}(\drv x) = \frac{2^{c(x)}\nu(\drv x)}{\int_{\K}2^{c(x)}\nu(\drv x)} \quad\text{for } x \in \K  ,
\end{equation*}
which leads to the optimal value of $- \log  \int_{\K}2^{c(x)}\nu(\drv x) $.
\end{lemma}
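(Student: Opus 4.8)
The plan is to prove the statement by a \emph{completion-of-the-square} (Gibbs variational) argument rather than by a Lagrangian calculus-of-variations computation, since the former delivers existence, uniqueness, and the optimal value simultaneously while sidestepping differentiability technicalities in the infinite-dimensional space $\mathcal{P}(\K)$. First I would record that the normalizing constant $Z := \int_\K 2^{c(x)}\nu(\drv x)$ is well defined: because $c\in\mathbb{B}(\K)$ is bounded, $2^{c(x)}$ is bounded above and below by strictly positive constants, and since $\nu\in\mathcal{P}(\K)$ this gives $0<Z<\infty$. In particular $\mu^\star$ is a genuine probability measure that is mutually absolutely continuous with $\nu$, and the candidate optimal value $-\log Z$ is finite.

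The key step is an exact rewriting of the objective. For any $\mu$ with $\mu\ll\nu$ (the only case with finite value, since otherwise $\KL{\mu}{\nu}=+\infty$ and such a $\mu$ cannot be optimal as $\mu^\star$ already achieves a finite value), the mutual absolute continuity of $\mu^\star$ and $\nu$ yields $\mu\ll\mu^\star$, and the chain rule for Radon--Nikodym derivatives gives
\begin{equation*}
\log\frac{\drv\mu}{\drv\nu} = \log\frac{\drv\mu}{\drv\mu^\star} + c(x) - \log Z .
\end{equation*}
Integrating against $\mu$ and using $\int_\K c(x)\,\mu(\drv x)=\inprod{\mu}{c}$ then produces the identity
\begin{equation*}
\KL{\mu}{\nu} - \inprod{\mu}{c} = \KL{\mu}{\mu^\star} - \log Z .
\end{equation*}

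From here the conclusion is immediate: the right-hand side depends on $\mu$ only through the nonnegative term $\KL{\mu}{\mu^\star}$, which vanishes precisely when $\mu=\mu^\star$ (using that the relative entropy is zero if and only if the two measures coincide, as noted in the Notation section). Hence $\mu^\star$ is the unique minimizer and the optimal value is $-\log Z=-\log\int_\K 2^{c(x)}\nu(\drv x)$. The only point requiring care is the chain-rule manipulation and the subsequent integration: I would justify it by observing that all three terms on the right are $\mu$-integrable, since $c$ is bounded, $\log Z$ is constant, and $\KL{\mu}{\mu^\star}$ is either finite or consistently $+\infty$ on both sides. I expect this integrability and absolute-continuity bookkeeping to be the main (though minor) obstacle; the algebraic heart of the argument is the one-line decomposition displayed above.
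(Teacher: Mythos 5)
Your proof is correct. The identity you derive,
\begin{equation*}
\KL{\mu}{\nu} - \inprod{\mu}{c} = \KL{\mu}{\mu^{\star}} - \log \int_{\K} 2^{c(x)}\nu(\drv x),
\end{equation*}
is exact for every $\mu \ll \nu$ (and the excluded case $\mu \not\ll \nu$ is handled correctly, since boundedness of $c$ makes the objective $+\infty$ there), the absolute-continuity and integrability bookkeeping is sound because $2^{c(x)}$ is bounded above and below by positive constants, and nonnegativity of $\KL{\mu}{\mu^\star}$ with equality iff $\mu \equiv \mu^\star$ delivers existence, uniqueness, and the optimal value $-\log\int_\K 2^{c(x)}\nu(\drv x)$ in one stroke. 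The only difference from the paper is that the paper does not prove the lemma at all: it simply declares the result standard and cites Csisz\'ar's 1975 work and a lemma in an earlier paper of the authors. Your argument is, in substance, exactly the standard one underlying those citations (the Gibbs variational principle, or Csisz\'ar's ``compensation identity''), so you have not taken a conceptually different route so much as supplied the self-contained derivation the paper outsources; what your write-up buys is that the lemma no longer depends on external references, and it makes transparent why no Slater-type or differentiability hypotheses are needed, whereas the paper's citation-only proof buys brevity. One cosmetic point: since the paper's $\log$ is base $2$, your chain-rule line $\log\tfrac{\drv\mu}{\drv\nu} = \log\tfrac{\drv\mu}{\drv\mu^\star} + c(x) - \log Z$ implicitly uses $\log_2 2^{c(x)} = c(x)$, which is consistent, but worth stating so a reader does not misread $\log$ as the natural logarithm.
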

\begin{proof}
The result is standard and follows from~\cite{ref:Csiszar-75} or alternatively by~\cite[Lemma~3.10]{TobiasSutter15}.
\end{proof}

Let $\R^{M}\ni z\mapsto \sigma_{T}(z):=\max_{x\in T}\inprod{x}{z}\in\R$ denote the support function of $T$, which is continuous since $T$ is compact \cite[Corollary~13.2.2]{ref:Rockafellar-97}. The primal-dual pair of problem \eqref{eq:main:problem} can be stated as
\begin{align}
\text{(primal program)}: \quad J^{\star} &= \min\limits_{\mu\in\mathcal{P}(\K)} \Big \{  \KL{\mu}{\nu} + \sup_{z\in\R^{M}}\left\{ \inprod{\mathcal{A}\mu}{z} - \sigma_{T}(z)\right\} \Big \} \label{eq:primal:problem}\\
\text{(dual program)}: \quad J_{\mathsf{D}}^{\star} &= \sup_{z\in\R^{M}} \Big \{ - \sigma_{T}(z)  +   \min\limits_{\mu\in\mathcal{P}(\K)} \left\{ \KL{\mu}{\nu}  + \inprod{\mathcal{A}\mu}{z}\right\} \Big \} \, ,  \label{eq:dual:problem}
\end{align}
where the dual function is given by 
\begin{equation} \label{eq:dual:function}
F(z)= - \sigma_{T}(z)  +   \min\limits_{\mu\in\mathcal{P}(\K)} \left\{ \KL{\mu}{\nu}  + \inprod{\mathcal{A}\mu}{z}\right\}.
\end{equation}
Note that the primal program \eqref{eq:primal:problem} is an infinite-dimensional convex optimization problem. The key idea of our analysis is driven by Lemma~\ref{lem:entropy:max} indicating that the dual function, that involves a minimization running over an infinite-dimensional space, is analytically available. As such, the dual problem becomes an unconstrained finite-dimensional convex optimization problem, which is amenable to first-order methods.
\begin{lemma}[Zero duality gap] \label{lem:zero:duality:gap}
There is no duality gap between the primal program \eqref{eq:primal:problem} and its dual \eqref{eq:dual:problem}, i.e., $J^{\star}=J_{\mathsf{D}}^{\star}$. Moreover, if there exists $\bar{\mu}\in\mathcal{P}(\K)$ such that $\mathcal{A}\bar{\mu}\in\mathsf{int}(T)$, then the set of optimal dual variables in \eqref{eq:dual:problem} is compact.
\end{lemma}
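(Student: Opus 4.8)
The plan is to establish the two claims of Lemma~\ref{lem:zero:duality:gap} separately, using convex duality in the dual pair of Banach spaces $(\M(\K),\mathbb{B}(\K))$ introduced earlier. For the zero duality gap, I would invoke a Lagrangian/minimax duality theorem (for instance a Sion-type minimax argument or the infinite-dimensional Fenchel--Rockafellar duality). The primal program \eqref{eq:primal:problem} is obtained by dualizing the constraint $\mathcal{A}\mu\in T$ through the term $\sup_{z}\{\inprod{\mathcal{A}\mu}{z}-\sigma_T(z)\}$, which is exactly the convex indicator of $\{\mathcal{A}\mu\in T\}$ since $\sigma_T$ is the support function of the compact convex set $T$. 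The objective $\mu\mapsto\KL{\mu}{\nu}$ is convex and lower-semicontinuous (as noted in the proof of Proposition~\ref{prop:solvability}), the inner function is linear in $z$ and convex in $\mu$, and $\mathcal{P}(\K)$ is weak-$\ast$ compact. Under these conditions one may interchange $\min_\mu$ and $\sup_z$ to pass from \eqref{eq:primal:problem} to \eqref{eq:dual:problem}, yielding $J^\star=J^\star_{\mathsf{D}}$.

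To make the interchange rigorous I would verify a constraint qualification. The natural one is a Slater-type condition: the hypothesis $\mathcal{A}\bar\mu\in\mathsf{int}(T)$ for the second claim is precisely such a condition, so I would check whether zero duality gap already follows from the compactness of the feasible set together with lower-semicontinuity (which gives attainment on the primal side) and from the fact that the perturbation function $p(b):=\min\{\KL{\mu}{\nu}:\mathcal{A}\mu\in T+b\}$ is convex; its subdifferentiability at $b=0$ would give strong duality. Here the key simplification afforded by Lemma~\ref{lem:entropy:max} is that the inner minimization over $\mu$ in \eqref{eq:dual:function} has the closed form $-\log\int_\K 2^{-\mathcal{A}^\ast z(x)}\nu(\drv x)$, so the dual function $F$ is concave, finite, and everywhere differentiable, which rules out a duality gap arising from dual ill-posedness.

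For the compactness of the optimal dual set, I would argue via coercivity of $F$. The Slater point $\bar\mu$ with $\mathcal{A}\bar\mu\in\mathsf{int}(T)$ guarantees a ball $B(\mathcal{A}\bar\mu,r)\subset T$ for some $r>0$. Then for any $z\in\R^M$ the support function satisfies $\sigma_T(z)\geq\inprod{\mathcal{A}\bar\mu}{z}+r\|z\|_2$, while the inner minimization term in \eqref{eq:dual:function} is bounded below by $\inprod{\mathcal{A}\bar\mu}{z}-\KL{\bar\mu}{\nu}$ (obtained by evaluating the minimand at $\mu=\bar\mu$ and dropping the nonnegative relative entropy term, or by a direct lower bound). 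Combining these, $F(z)\leq \KL{\bar\mu}{\nu}-r\|z\|_2$, so $F(z)\to-\infty$ as $\|z\|_2\to\infty$. Consequently the superlevel set $\{z:F(z)\geq F(z_0)\}$ containing any maximizer is bounded; since $F$ is continuous, this set is closed, hence compact, and the set of dual optimizers is a nonempty compact subset of it.

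The main obstacle I anticipate is the rigorous justification of strong duality in the infinite-dimensional primal space rather than the coercivity estimate, which is essentially a one-line computation once the Slater ball is produced. Specifically, I would need to ensure that the minimax interchange is licensed despite $\M(\K)$ being infinite-dimensional: one must confirm that the relevant topology (the weak-$\ast$ topology from the pairing with $\mathbb{B}(\K)$) makes $\mathcal{P}(\K)$ compact and $\mu\mapsto\inprod{\mathcal{A}\mu}{z}$ continuous, and that $\KL{\cdot}{\nu}$ is weak-$\ast$ lower-semicontinuous. These facts are already assembled in the proof of Proposition~\ref{prop:solvability}, so I expect the cleanest route is to cite a standard minimax theorem (e.g.\ Sion's) whose hypotheses are met term by term, thereby reducing the duality claim to the structural observations already made.
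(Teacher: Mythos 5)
Your proposal is correct, and it splits into two halves that compare differently with the paper. For the zero duality gap you end up exactly where the paper goes: the paper's proof is a two-line appeal to Sion's minimax theorem, justified by the lower semicontinuity and convexity of $\mu\mapsto\KL{\mu}{\nu}$ (via the variational representation of relative entropy) together with the weak compactness of $\mathcal{P}(\K)$ and linearity in $z$ --- precisely the ``term by term'' verification you settle on at the end. Note that your intermediate hedging about a constraint qualification is unnecessary and slightly misleading: Sion's theorem requires no Slater-type condition, which matters here because the lemma asserts $J^{\star}=J_{\mathsf{D}}^{\star}$ \emph{without} any interiority hypothesis (the hypothesis $\mathcal{A}\bar{\mu}\in\mathsf{int}(T)$ is only needed for the second claim). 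For the compactness of the dual optimal set, you diverge from the paper in a useful way: the paper simply cites a standard result (Bertsekas, Proposition~5.3.1, on boundedness of dual optimal solutions under a Slater condition), whereas you inline the underlying coercivity argument, using $B(\mathcal{A}\bar{\mu},r)\subset T$ to get $\sigma_T(z)\geq\inprod{\mathcal{A}\bar{\mu}}{z}+r\|z\|_2$ and concluding $F(z)\leq\KL{\bar{\mu}}{\nu}-r\|z\|_2$; this buys a self-contained proof (and nonemptiness of the maximizer set, via continuity of the finite concave function $F$) at the cost of a few extra lines. One slip in your write-up: you state that the inner minimization in \eqref{eq:dual:function} is ``bounded below by $\inprod{\mathcal{A}\bar{\mu}}{z}-\KL{\bar{\mu}}{\nu}$,'' but what your final inequality actually uses, and what evaluating the minimand at $\mu=\bar{\mu}$ actually gives, is the \emph{upper} bound
\begin{equation*}
\min_{\mu\in\mathcal{P}(\K)}\left\{\KL{\mu}{\nu}+\inprod{\mathcal{A}\mu}{z}\right\}\leq\KL{\bar{\mu}}{\nu}+\inprod{\mathcal{A}\bar{\mu}}{z}\, ;
\end{equation*}
a lower bound would be useless for bounding $F$ from above. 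With that direction (and sign) corrected, the estimate $F(z)\leq\KL{\bar{\mu}}{\nu}-r\|z\|_2$ and the ensuing compactness conclusion are sound.
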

\begin{proof}
Recall that the relative entropy is known to be lower semicontinuous and convex in the first argument, which can be seen as a direct consequence of the duality relation for the relative entropy~\cite[Corollary~4.15]{ref:boucheron-13}. Hence, the desired zero duality gap follows by Sion's minimax theorem~\cite[Theorem~4.2]{ref:Sion-58}. The compactness of the set of dual optimizers is due to~\cite[Proposition~5.3.1]{ref:Bertsekas-09}. 
\end{proof}

Because the dual function \eqref{eq:dual:function} turns out to be non-smooth, in the absence of any additional structure, the efficiency estimate of a black-box first-order method is of order $O( \nicefrac{1}{\varepsilon^{2}})$, where $\varepsilon$ is the desired absolute additive accuracy of the approximate solution in function value~\cite{ref:nesterov-book-04}. 
We show, however, that the generalized entropy maximization problem \eqref{eq:primal:problem} has a certain structure that allows us to deploy the recent developments in \cite{nesterov05} for approximating non-smooth problems by smooth ones, leading to an efficiency estimate of  order $O( \nicefrac{1}{\varepsilon})$.  This, together with the low complexity of each iteration step in the approximation scheme, offers a numerical method that has an attractive computational complexity. 
In the spirit of~\cite{nesterov05,ref:devolder-12}, we introduce a smoothing parameter $\eta:=(\eta_{1},\eta_{2})\in\Rsp^{2}$ and consider a smooth approximation of the dual function 
\begin{equation} \label{eq:dual:function:smoothed}
F_{\eta}(z):=  -\max_{x\in T}\left\{ \inprod{x}{z} - \frac{\eta_{1}}{2}\norm{x}_{2}^{2} \right\} +  \min\limits_{\mu\in\mathcal{P}(\K)} \left\{ \KL{\mu}{\nu} + \inprod{\mathcal{A}\mu}{z}\right\}-\frac{\eta_{2}}{2}\norm{z}_{2}^{2} \, ,
\end{equation}
with respective optimizers denoted by $x^{\star}_{z}$ and $\mu^{\star}_{z}$. Consider the projection operator $\pi_T:\R^m\to\R$, $\pi_T(y)=\arg\min_{x\in T}\|x-y\|_2^2$. It is straightforward to see that the optimizer $x^{\star}_{z}$ is given by
\begin{align*}
x^{\star}_{z} = \arg\min_{x\in T} \| x - \eta_1^{-1}z\|_2^2 = \pi_T\left(\eta_1^{-1}z\right). 
\end{align*}
Hence, the complexity of computing $x^\star_z$ is determined by the projection operator onto $T$; for simple enough cases (e.g., 2-norm balls, hybercubes) the solution is analytically available, while for more general cases (e.g., simplex, 1-norm balls) it can be computed at relatively low computational effort, see \cite[Section~5.4]{richter_phd} for a comprehensive survey. 
The optimizer $\mu^{\star}_{z}$ according to Lemma~\ref{lem:entropy:max} is given by
\begin{equation*}
\mu^{\star}_{z}(B)  = \frac{\int_B 2^{-\mathcal{A}^*z(x)}\nu(\drv x)}{\int_\K 2^{-\mathcal{A}^*z(x)}\nu(\drv x)}, \quad \text{for all } B\in\mathcal{B}(\K).
\end{equation*}
\begin{lemma}[Lipschitz gradient] \label{lem:lipschitz:cts:gradient}
The dual function $F_{\eta}$ defined in \eqref{eq:dual:function:smoothed} is $\eta_{2}$-strongly concave and differentiable. Its gradient $\nabla F_{\eta}(z) =-x^{\star}_{z} + \mathcal{A}\mu^{\star}_{z} - \eta_{2} z$ is Lipschitz continuous with Lipschitz constant $\tfrac{1}{\eta_{1}}+\left( \sum_{i=1}^M B^i \right)^2+\eta_{2}$ and $B:=\max\{|x| \ : \ x\in \K\}$.
\end{lemma}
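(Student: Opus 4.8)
The plan is to decompose the smoothed dual into three concave summands $F_\eta = g_1 + g_2 + g_3$, where
\begin{align*}
g_1(z) &:= -\max_{x\in T}\Big\{\inprod{x}{z} - \tfrac{\eta_1}{2}\norm{x}_2^2\Big\}, \qquad g_3(z) := -\tfrac{\eta_2}{2}\norm{z}_2^2,\\
g_2(z) &:= \min_{\mu\in\mathcal{P}(\K)}\big\{\KL{\mu}{\nu} + \inprod{\mathcal{A}\mu}{z}\big\},
\end{align*}
and to establish differentiability, the stated gradient, and a Lipschitz constant for each piece separately. Since gradients add and the triangle inequality makes Lipschitz moduli additive, the three contributions $\tfrac{1}{\eta_1}$, $(\sum_{i=1}^M B^i)^2$ and $\eta_2$ will combine into the claimed constant. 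Strong concavity I would argue at the level of the functions, not the Hessians: $g_1$ is concave as a negated pointwise maximum of affine functions of $z$, $g_2$ is concave as a pointwise minimum of affine functions of $z$, and $g_3$ is $\eta_2$-strongly concave, so the sum is $\eta_2$-strongly concave. The term $g_3$ is immediate, with $\nabla g_3(z)=-\eta_2 z$ and Lipschitz constant $\eta_2$.

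For $g_1$ I would invoke the representation already recorded in the excerpt, namely that the unique inner maximizer is $x^{\star}_{z}=\pi_T(\eta_1^{-1}z)$, uniqueness following from $\eta_1$-strong convexity of $x\mapsto -\inprod{x}{z}+\tfrac{\eta_1}{2}\norm{x}_2^2$. Danskin's theorem then gives differentiability with $\nabla g_1(z)=-x^{\star}_{z}$, and since the Euclidean projection onto the convex set $T$ is nonexpansive, $\norm{x^{\star}_{z}-x^{\star}_{z'}}_2=\norm{\pi_T(\eta_1^{-1}z)-\pi_T(\eta_1^{-1}z')}_2\leq \eta_1^{-1}\norm{z-z'}_2$, i.e.\ $\nabla g_1$ is $\tfrac{1}{\eta_1}$-Lipschitz. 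Equivalently this is Nesterov's smoothing lemma~\cite{nesterov05} with the $1$-strongly convex prox-function $\tfrac12\norm{\cdot}_2^2$ and smoothing weight $\eta_1$.

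The substance is the log-partition term $g_2$. Applying Lemma~\ref{lem:entropy:max} with $c=-\mathcal{A}^*z$ yields the closed form $g_2(z)=-\log\int_\K 2^{-\mathcal{A}^*z(x)}\nu(\drv x)$ together with the optimizer $\mu^{\star}_{z}$. Because $\K$ is compact the monomials $x\mapsto x^i$ are bounded by $B^i$, so the integrand and all its $z$-derivatives are dominated uniformly on compact $z$-sets; differentiation under the integral sign is therefore justified and gives $\nabla g_2(z)=\mathcal{A}\mu^{\star}_{z}=(\E{X^i})_{i=1}^M$ for $X\sim\mu^{\star}_{z}$, matching the claimed gradient. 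A second differentiation produces the Hessian $\nabla^2 g_2(z)=-\ln 2\,\Sigma_z$, where $\Sigma_z$ is the covariance matrix of the moment vector $(X,\dots,X^M)$ under $\mu^{\star}_{z}$; in particular $\Sigma_z\succeq 0$ reconfirms concavity of $g_2$.

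It remains to bound the spectral norm of $\Sigma_z$, which is the crux. For any unit vector $y\in\R^M$ I would write $y\transp\Sigma_z y=\mathrm{Var}_{\mu^{\star}_{z}}(\sum_{i=1}^M y_i X^i)\leq \E{(\sum_{i=1}^M y_i X^i)^2}$ and bound the random variable pointwise via $|\sum_{i=1}^M y_i X^i|\leq \sum_{i=1}^M |y_i|B^i\leq \norm{y}_\infty\sum_{i=1}^M B^i\leq \sum_{i=1}^M B^i$, using $\norm{y}_\infty\leq\norm{y}_2=1$ and $|X|\leq B$. Hence $\norm{\Sigma_z}_2\leq(\sum_{i=1}^M B^i)^2$ and $\norm{\nabla^2 g_2(z)}_2\leq \ln 2\,(\sum_{i=1}^M B^i)^2\leq(\sum_{i=1}^M B^i)^2$, so $\nabla g_2$ is $(\sum_{i=1}^M B^i)^2$-Lipschitz. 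Summing the three moduli yields the result, and the same monomial estimate incidentally gives $\norm{\mathcal{A}}\leq\sum_{i=1}^M B^i$, the boundedness of $\mathcal{A}$ referenced earlier. I expect the main obstacle to be precisely this spectral-norm estimate: the naive trace bound $\sum_{i=1}^M B^{2i}$ is too weak, and one must instead pass through the variance of the bounded linear form $\sum_{i=1}^M y_i X^i$ to reach exactly $(\sum_{i=1}^M B^i)^2$; the only other point requiring care is the base-$2$ convention, which contributes the harmless factor $\ln 2<1$.
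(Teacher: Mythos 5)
Your proof is correct, and it reaches the stated constant, but the route for the key term differs from the paper's. The paper's proof is a one-line appeal to Nesterov's smoothing theorem \cite[Theorem~1]{nesterov05}: the term $\min_{\mu\in\mathcal{P}(\K)}\{\KL{\mu}{\nu}+\inprod{\mathcal{A}\mu}{z}\}$ is treated abstractly as a max-type function smoothed by a prox-function that is $1$-strongly convex with respect to the total variation norm --- this is exactly what Pinsker's inequality $\|\mu-\nu\|_{\mathsf{TV}}\leq\sqrt{2\KL{\mu}{\nu}}$ supplies --- so its gradient is Lipschitz with constant $\norm{\mathcal{A}}^2$, and the rest of the proof is the Cauchy--Schwarz estimate $\norm{\mathcal{A}}\leq\sum_{i=1}^M B^i$. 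You instead exploit the closed form of the log-partition function from Lemma~\ref{lem:entropy:max}, differentiate under the integral (legitimate by compactness of $\K$), identify the Hessian as $-\ln 2\,\Sigma_z$ with $\Sigma_z$ the covariance of the moment vector under $\mu^{\star}_z$, and bound $\norm{\Sigma_z}_2$ by the variance of the bounded linear form $\sum_i y_i X^i$. What the paper's route buys is generality (it works for any strongly convex prox-function, with no closed form needed); what your route buys is self-containedness: you never invoke Nesterov's theorem in the measure-valued (infinite-dimensional) setting, where its applicability requires the dual-pair machinery the paper only gestures at, and your constant is in fact slightly sharper, namely $\ln 2\,\bigl(\sum_{i=1}^M B^i\bigr)^2$ for the middle term.

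One side remark in your closing paragraph is wrong, though it does not affect the proof: the "naive trace bound" is not too weak. Since $\Sigma_z\succeq 0$, one has $\norm{\Sigma_z}_2\leq\mathrm{tr}(\Sigma_z)\leq\sum_{i=1}^M B^{2i}\leq\bigl(\sum_{i=1}^M B^i\bigr)^2$, so the trace bound yields a constant that is smaller than the one stated in the lemma, and a smaller Lipschitz constant is always admissible. Your variance-of-a-linear-form argument is fine, but it was not forced on you.
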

\begin{proof}
The proof follows along the lines of~\cite[Theorem~1]{nesterov05} and in particular by recalling that the relative entropy (in the first argument) is strongly convex with convexity parameter one and Pinsker's inequality, that says that for any $\mu\in\mathcal{P}(\K)$ we have
\begin{align}
 \|\mu-\nu\|_{\textsf{TV}}  \leq \sqrt{2\KL{\mu}{\nu}} \, .
\end{align}
Moreover, we use the bound
\begin{align}
\norm{\AAA} &=		\sup\limits_{\lambda\in\R^{M}\!, \, \mu\in\mathcal{P}(\K)} \left\{ \inprod{\AAA \mu}{\lambda} \ : \ \norm{\lambda}_{2}=1, \ \norm{\mu}_{\mathsf{TV}}=1 \right\} \nonumber \\
			  &\leq 	\sup\limits_{\lambda\in\R^{M}\!, \, \mu\in\mathcal{P}(\K)} \left\{ \norm{\AAA \mu}_{2} \norm{\lambda}_{2} \ : \ \norm{\lambda}_{2}=1, \ \norm{\mu}_{\mathsf{TV}}=1\right\}\label{eq:norm:W:proof:step:CS} \\
			&\leq 	   \sup\limits_{ \mu\in\mathcal{P}(\K)} \left\{ \norm{\AAA \mu}_{1} \ : \ \norm{\mu}_{\mathsf{TV}}=1\right\}\nonumber \\
			&= 		\sup\limits_{ \mu\in\mathcal{P}(\K)} \left\{ \sum_{i=1}^{M} \left| \int_{\mathcal{\K}} x^i \mu(\drv x) \right|   \ : \ \norm{\mu}_{\mathsf{TV}}=1\right\}\nonumber \\
			&\leq 	\sum_{i=1}^M B^i \, , \nonumber
\end{align}
where \eqref{eq:norm:W:proof:step:CS} is due to the Cauchy-Schwarz inequality.
\end{proof}

Note that $F_{\eta}$ is $\eta_{2}$-strongly concave and according to Lemma~\ref{lem:lipschitz:cts:gradient} its gradient is Lipschitz continuous with constant $L(\eta):=\tfrac{1}{\eta_{1}}+\norm{\mathcal{A}}^{2}+\eta_{2}$.
We finally consider the approximate dual program given by
\begin{align} 
\text{(smoothed dual program)}: \quad J_{\eta}^{\star} &= \sup_{z\in\R^{M}} F_{\eta}(z) \, .  \label{eq:dual:problem:approx:double}
\end{align}
It turns out that \eqref{eq:dual:problem:approx:double} belongs to a favorable class of smooth and strongly convex optimization problems that can be solved by a fast gradient method given in Algorithm~\hyperlink{algo:1}{1} (see~\cite{ref:nesterov-book-04}) with an efficiency estimate of the order $O(\nicefrac{1}{\sqrt{\varepsilon}})$.

 \begin{table}[!htb]
\centering 
\begin{tabular}{c}
  \Xhline{3\arrayrulewidth}  \hspace{1mm} \vspace{-3mm}\\ 
\hspace{0mm}{\bf{\hypertarget{algo:1}{Algorithm 1: } }} Optimal scheme for smooth $\&$ strongly convex optimization \hspace{14mm} \\ \vspace{-3mm} \\ \hline \vspace{-0.5mm}
\end{tabular} \\
\vspace{-5mm}
 \begin{flushleft}
  {\hspace{3mm}Choose $w_0=y_{0} \in \R^{M}$ and $\eta\in\Rsp^2$}
 \end{flushleft}
 \vspace{-6mm}
 \begin{flushleft}
  {\hspace{3mm}\bf{For $k\geq 0$ do$^{*}$}}
 \end{flushleft}
 \vspace{-8mm}
 
  \begin{tabular}{l l}
\hspace{15mm}{\bf Step 1: } & Set $y_{k+1}=w_{k}+\frac{1}{L(\eta)}\nabla F_{\eta}(w_{k})$ \\
\hspace{15mm}{\bf Step 2: } & Compute $w_{k+1}=y_{k+1} + \frac{\sqrt{L(\eta)}-\sqrt{\eta_{2}}}{\sqrt{L(\eta)}+\sqrt{\eta_{2}}}(y_{k+1}-y_{k})$\\
  \end{tabular}
   \begin{flushleft}
  {\hspace{3mm}[*The stopping criterion is explained in Remark~\ref{remark:stopping}]}
  \vspace{-10mm}
 \end{flushleft}  
\begin{tabular}{c}
\hspace{1mm} \phantom{ {\bf{Algorithm:}} Optimal Scheme for Smooth $\&$ Strongly Convex Optimization}\hspace{15mm} \\ \vspace{-1.0mm} \\\Xhline{3\arrayrulewidth}
\end{tabular}
\end{table}

Under an additional regularity assumption, solving the smoothed dual problem~\eqref{eq:dual:problem:approx:double} provides an estimate of the primal and dual variables of the original non-smooth problems \eqref{eq:primal:problem} and \eqref{eq:dual:problem}, respectively, as summarized in the next theorem (Theorem~\ref{thm:main:result:inf:dim}).
The main computational difficulty of the presented method lies in the gradient evaluation $\nabla F_{\eta}$. We refer to Section~\ref{sec:gradient:evalutaion}, for a detailed discussion on this subject.

\begin{myass}[Slater point] \label{ass:slater}
There exits a strictly feasible solution to \eqref{eq:main:problem}, i.e., $\mu_0\in\mathcal{P}(\K)$ such that $\mathcal{A}\mu_0\in T$ and $\delta:=\min_{y\in  T^c} \| \mathcal{A}\mu_0 - y\|_2 >0$.
\end{myass}
Note that finding a Slater point $\mu_0$ such that Assumption~\ref{ass:slater} holds, in general can be difficult. In Remark~\ref{rem:slater:point} we present a constructive way of finding such an interior point. Given Assumption~\ref{ass:slater}, for $\varepsilon>0$ define
\begin{align}
 &C:=\KL{\mu_0}{\nu}, \qquad D :={1 \over 2} \max_{x \in T} \|x\|_2, \qquad \eta_{1}(\varepsilon) :=\frac{\varepsilon}{4D},  \qquad  \eta_{2}(\varepsilon) :=\frac{\varepsilon \delta^2}{2C^2} \nonumber \\
 &N_1(\varepsilon):=2 \left( \sqrt{\frac{8DC^2}{\varepsilon^2 \delta^2}+\frac{2\|\mathcal{A}\|^2 C^2}{\varepsilon \delta^2}+1}\right) \ln\left(\frac{10(\varepsilon +2C)}{\varepsilon}\right) \label{eq:definitions:algo:cont} \\
&N_2(\varepsilon):=2 \left(\! \sqrt{\frac{8DC^2}{\varepsilon^2 \delta^2}+\frac{2\|\mathcal{A}\|^2 C^2}{\varepsilon \delta^2}+1}\!\right)\! \ln\!\left(\! \frac{C}{\varepsilon \delta(2-\sqrt{3})}\sqrt{4\left(\! \frac{4D}{\varepsilon}+\|\mathcal{A}\|^2 + \frac{\varepsilon \delta^2}{2C^2}\! \right)\!\left(\! C +\frac{\varepsilon}{2} \right)}\! \right). \nonumber
\end{align}

\begin{theorem}[Convergence rate]\label{thm:main:result:inf:dim}
Given Assumption~\ref{ass:slater} and~\eqref{eq:definitions:algo:cont}, let $\varepsilon>0$ and $N(\varepsilon) := \left \lceil \max\{ N_1(\varepsilon), N_2(\varepsilon)\} \right \rceil$. Then, $N(\varepsilon)$ iterations of Algorithm~\hyperlink{algo:1}{1} produce approximate solutions to the problems
\eqref{eq:dual:problem} and \eqref{eq:primal:problem} given by
\begin{equation} \label{eq:estimates:primal:dual}
\hat{z}_{k,\eta}:=y_{k} \quad \text{and} \quad \hat{\mu}_{k,\eta}(B) := \frac{\int_B 2^{-\mathcal{A}^*\hat{z}_{k,\eta}(x)}\nu(\drv x)}{\int_\K 2^{-\mathcal{A}^*\hat{z}_{k,\eta}(x)}\nu(\drv x)}\, , \quad \text{for all } B\in\mathcal{B}(\K) \, ,
\end{equation}
which satisfy
\begin{subequations}
\begin{alignat}{3}
&\text{dual $\varepsilon$-optimality:}\hspace{20mm} &&0\leq J^{\star} - F(\hat{z}_{k(\varepsilon)})\leq \varepsilon \label{eq:thm:dual:optimality:cts} \\
&\text{primal $\varepsilon$-optimality:}\hspace{25mm} &&|\KL{\hat{\mu}_{k(\varepsilon)}}{\nu}- J^{\star} | \leq  2(1+2\sqrt{3})\varepsilon \label{eq:thm:primal:optimality:cts} \\
&\text{primal $\varepsilon$-feasibility:}\hspace{20mm} &&d(\mathcal{A}\hat{\mu}_{k(\varepsilon)},T)\leq \frac{2\varepsilon\delta}{C} \, , \label{eq:thm:primal:feasibility:cts}
\end{alignat}
\end{subequations}
where $d(\cdot,T)$ denotes the distance to the set $T$, i.e., $d(x,T):=\min_{y\in T}\|x-y\|_2$.
\end{theorem}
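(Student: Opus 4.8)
The plan is to treat the statement as an instance of Nesterov's smoothing framework augmented by the second, strongly-concave smoothing, and then to superimpose a primal-recovery analysis on top of the dual convergence. There are three quantities to control: the optimization error of Algorithm~\hyperlink{algo:1}{1} applied to the smooth surrogate $F_\eta$, the two smoothing errors introduced by $\eta_1$ and $\eta_2$, and the error incurred when reading off a primal candidate as the Gibbs distribution $\hat\mu_{k,\eta}=\mu^\star_{y_k}$. The two iteration counts $N_1(\varepsilon)$ and $N_2(\varepsilon)$ will emerge as the number of steps needed to drive, respectively, the dual gap and the (more demanding) gradient norm below the required threshold.

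First I would record the linear convergence of the algorithm. By Lemma~\ref{lem:lipschitz:cts:gradient}, $F_\eta$ is $\eta_2$-strongly concave with an $L(\eta)$-Lipschitz gradient, so the accelerated scheme obeys a contraction $J_\eta^\star-F_\eta(y_k)\le(1-\sqrt{\eta_2/L(\eta)})^k\,\Delta_0$ with $\Delta_0$ a fixed initial gap; taking logarithms and inserting $L(\eta)=\eta_1^{-1}+\norm{\mathcal{A}}^2+\eta_2$ produces the $\sqrt{L(\eta)/\eta_2}\,\ln(\cdot)$ form appearing in $N_1(\varepsilon)$ and $N_2(\varepsilon)$.

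Next I would quantify the two smoothing errors. Writing $F_\eta(z)-F(z)$ as the difference between $\sigma_T(z)$ and its $\eta_1$-smoothed counterpart plus the $-\tfrac{\eta_2}{2}\norm{z}_2^2$ term, the first piece is nonnegative and bounded by $\eta_1 D$, while the second is bounded by $\tfrac{\eta_2}{2}\norm{z}_2^2$. The role of Assumption~\ref{ass:slater} is precisely to bound $\norm{z}_2$ on the relevant iterates: by Lemma~\ref{lem:zero:duality:gap} the dual optimizers lie in a compact set, and the Slater margin $\delta$ together with $C=\KL{\mu_0}{\nu}$ yields a bound of the form $\norm{z^\star}_2\le C/\delta$. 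The prescribed choices $\eta_1(\varepsilon)=\varepsilon/(4D)$ and $\eta_2(\varepsilon)=\varepsilon\delta^2/(2C^2)$ are exactly those that cap each smoothing error at $\varepsilon/4$. Combining this with the optimization error and with zero duality gap (Lemma~\ref{lem:zero:duality:gap}), after $N_1(\varepsilon)$ steps one obtains the dual $\varepsilon$-optimality \eqref{eq:thm:dual:optimality:cts}, the lower bound there being weak duality.

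The heart of the argument, and the step I expect to be the main obstacle, is the primal recovery giving \eqref{eq:thm:primal:optimality:cts} and \eqref{eq:thm:primal:feasibility:cts}. The feasibility residual is controlled through the gradient: since $\nabla F_\eta(y_k)=-x^\star_{y_k}+\mathcal{A}\mu^\star_{y_k}-\eta_2 y_k$ with $x^\star_{y_k}=\pi_T(\eta_1^{-1}y_k)\in T$, one has $d(\mathcal{A}\hat\mu_{k,\eta},T)\le\norm{\nabla F_\eta(y_k)}_2+\eta_2\norm{y_k}_2$. The difficulty is that a small function gap does not by itself make the gradient small, and this is exactly where the second smoothing is essential: strong concavity converts the function gap into an iterate gap, $\tfrac{\eta_2}{2}\norm{y_k-z^\star_\eta}_2^2\le J_\eta^\star-F_\eta(y_k)$, and the Lipschitz gradient then gives $\norm{\nabla F_\eta(y_k)}_2\le L(\eta)\norm{y_k-z^\star_\eta}_2$, so the gradient norm is tied to the square root of the rescaled function gap. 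Forcing this quantity below order $\varepsilon\delta/C$ is a stricter requirement than dual $\varepsilon$-optimality, and it is what produces the larger iteration count $N_2(\varepsilon)$. Finally, primal $\varepsilon$-optimality follows by relating $\KL{\hat\mu_{k,\eta}}{\nu}$ back to the smoothed dual value, reading $\KL{\mu^\star_{y_k}}{\nu}+\inprod{\mathcal{A}\mu^\star_{y_k}}{y_k}$ off the definition of $F_\eta$, and then absorbing the feasibility slack together with the two smoothing terms; carefully tracking these contributions is what yields the explicit constant $2(1+2\sqrt{3})$.
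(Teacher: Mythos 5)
Your proposal is correct and takes essentially the same route as the paper: the paper's proof consists precisely of verifying your three ingredients --- the Slater-based multiplier bound $\|z^\star\|_2 \le C/\delta$, the bound $F(z)\le J^\star \le \KL{\mu_0}{\nu}=C$, and the strong concavity/Lipschitz-gradient structure of $F_\eta$ from Lemmas~\ref{lem:zero:duality:gap} and~\ref{lem:lipschitz:cts:gradient} --- and then reading the three error bounds off the double-smoothing analysis of \cite{ref:devolder-12}, whose internal mechanisms (linear convergence of the fast gradient scheme, the two smoothing errors each capped at $\varepsilon/4$ by the choices of $\eta_1,\eta_2$, and feasibility controlled through the gradient of the smoothed dual) are exactly what you reconstruct. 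The only point needing care is your passage from function gap to gradient norm via strong concavity plus Lipschitz continuity, which loses a factor $\sqrt{L(\eta)/\eta_2}$ relative to the smoothness inequality $\|\nabla F_\eta(y_k)\|_2^2 \le 2L(\eta)\bigl(J_\eta^\star - F_\eta(y_k)\bigr)$ underlying the cited bound, so recovering the paper's exact $N_2(\varepsilon)$ and the constant $2(1+2\sqrt{3})$ requires that sharper inequality together with an explicit bound of the form $\|y_k\|_2 \le 2\sqrt{3}\,C/\delta$ for the floor term $\eta_2\|y_k\|_2$ in your feasibility decomposition.
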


In some applications, Assumption~\ref{ass:slater} does not hold, as for example in the classical case where $\mathcal{U}_i=\{0\}$ for all $i=1,\ldots,M$, i.e., there is no uncertainty in the measurement model~\eqref{eq:measurement:model}. Moreover, in other cases satisfying Assumption~\ref{ass:slater} using the construction described in Remark~\ref{rem:slater:point} might be computationally expensive. 
Interestingly, Algorithm~\hyperlink{algo:1}{1} can be run irrespective of whether Assumption~\ref{ass:slater} holds or not, i.e. for any choice of $C$ and $\delta$. While explicit error bounds of Theorem~\ref{thm:main:result:inf:dim} as well as the a-posteriori error bound discussed below do not hold anymore, the asymptotic convergence is not affected.

\begin{proof}
Using Assumption~\ref{ass:slater}, note that the constant defined as
\begin{equation*}
\iota:= \frac{\KL{\mu_0}{\nu}-\min_{\mu\in\mathcal{P}(\K)}\KL{\mu}{\nu}}{\min_{y\in  T^c} \| \mathcal{A}\mu_0 - y\|_2} = \frac{C}{\delta}
\end{equation*}
can be shown to be an upper bound for the optimal dual multiplier \cite[Lemma 1]{ref:Nedic-08}, i.e., $\| z^\star \|_2 \leq \iota$.
The dual function can be bounded from above by $C$, since weak duality ensures $ F(z) \leq J^\star \leq \KL{\mu_0}{\nu}=C$ for all $z \in \R^M$.
Moreover, if we recall the preparatory Lemmas~\ref{lem:zero:duality:gap} and \ref{lem:lipschitz:cts:gradient}, we are finally in the setting such that the presented error bounds can be derived from \cite{ref:devolder-12}, see Appendix~\ref{app:detailed:proof} for a detailed derivation.
\end{proof}

Theorem~\ref{thm:main:result:inf:dim} directly implies that we need at most $O(\frac{1}{\varepsilon} \log \frac{1}{\varepsilon})$ iterations of Algorithm~\hyperlink{algo:1}{1} to achieve $\varepsilon$-optimality of primal and dual solutions as well as $\varepsilon$-feasible primal variable. 
Note that Theorem~\ref{thm:main:result:inf:dim} provides an explicit bound on the so-called \emph{a-priori errors}, together with approximate optimizer of the primal \eqref{eq:primal:problem} and dual \eqref{eq:dual:problem} problem. The latter allows us to derive an \emph{a-posteriori error} depending on the approximate optimizers, which is often significantly smaller than the a-priori error.
\begin{corollary}[Posterior error estimation] \label{Cor:aposteriori:infinite:dim}
Given Assumption~\ref{ass:slater}, the approximate primal and dual variables $\hat{\mu}$ and $\hat{z}$ given by~\eqref{eq:estimates:primal:dual}, satisfy the following a-posteriori error bound 
\begin{align*}
F(\hat{z})\leq J^{\star}\leq \KL{\hat{\mu}}{\nu} + \frac{C}{\delta} d(\mathcal{A}\hat{\mu},T) \, ,
\end{align*}
where $d(\cdot,T)$ denotes the distance to the set $T$, i.e., $d(x,T):=\inf_{y\in T}\|x-y\|_2$.
\end{corollary}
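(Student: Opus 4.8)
The plan is to establish the two inequalities in the claimed chain separately, the left one being a triviality and the right one requiring a short perturbation argument. The left inequality $F(\hat{z})\leq J^\star$ is nothing but weak duality: by the zero duality gap result of Lemma~\ref{lem:zero:duality:gap} we have $J^\star = J_{\mathsf{D}}^\star = \sup_{z\in\R^M} F(z)$, so in particular $F(\hat{z})\leq \sup_{z} F(z) = J^\star$ for the concrete iterate $\hat{z}$. No property of $\hat{z}$ beyond membership in $\R^M$ is needed here.

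For the right inequality I would exploit the explicit bound on the dual optimizer already available from the proof of Theorem~\ref{thm:main:result:inf:dim}. Under Assumption~\ref{ass:slater} the set of dual optimizers is nonempty and compact (Lemma~\ref{lem:zero:duality:gap}), and any optimal dual variable $z^\star$ satisfies $\|z^\star\|_2\leq \iota = C/\delta$ by \cite[Lemma~1]{ref:Nedic-08}. Fixing such a $z^\star$, the zero duality gap gives
\begin{equation*}
J^\star = F(z^\star) = -\sigma_T(z^\star) + \min_{\mu\in\mathcal{P}(\K)}\left\{\KL{\mu}{\nu} + \inprod{\mathcal{A}\mu}{z^\star}\right\}.
\end{equation*}
Since $\hat{\mu}\in\mathcal{P}(\K)$ is a feasible point for this inner unconstrained minimization (it need not satisfy the moment constraint, which is exactly why an a-posteriori estimate is useful), I upper bound the minimum by its value at $\hat{\mu}$ to obtain $J^\star \leq \KL{\hat{\mu}}{\nu} + \inprod{\mathcal{A}\hat{\mu}}{z^\star} - \sigma_T(z^\star)$.

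It then remains to control $\inprod{\mathcal{A}\hat{\mu}}{z^\star} - \sigma_T(z^\star)$. Writing $\pi:=\pi_T(\mathcal{A}\hat{\mu})\in T$ for the Euclidean projection of $\mathcal{A}\hat{\mu}$ onto $T$ and using $\sigma_T(z^\star)=\max_{x\in T}\inprod{x}{z^\star}\geq \inprod{\pi}{z^\star}$, I would bound
\begin{equation*}
\inprod{\mathcal{A}\hat{\mu}}{z^\star} - \sigma_T(z^\star) \leq \inprod{\mathcal{A}\hat{\mu}-\pi}{z^\star} \leq \|\mathcal{A}\hat{\mu}-\pi\|_2\,\|z^\star\|_2 = d(\mathcal{A}\hat{\mu},T)\,\|z^\star\|_2,
\end{equation*}
where the second inequality is Cauchy--Schwarz and the final equality uses that $\pi$ realizes the distance to $T$. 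Inserting $\|z^\star\|_2\leq C/\delta$ yields exactly $J^\star\leq \KL{\hat{\mu}}{\nu} + \frac{C}{\delta}\,d(\mathcal{A}\hat{\mu},T)$, completing the chain.

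The argument is short because the substantive ingredients are already in place: strong duality from Lemma~\ref{lem:zero:duality:gap} and the quantitative multiplier bound $\|z^\star\|_2\leq C/\delta$ extracted in the proof of Theorem~\ref{thm:main:result:inf:dim}. The only genuinely new step is the projection/Cauchy--Schwarz estimate, whose purpose is to translate the possible primal infeasibility of $\hat{\mu}$ into the computable quantity $d(\mathcal{A}\hat{\mu},T)$; note that $\sigma_T(z^\star)\geq\inprod{\pi}{z^\star}$ holds for \emph{every} $z^\star$ simply because $\pi\in T$, so no case distinction on the sign of $z^\star$ is required. I do not anticipate a real obstacle here — the one point worth checking is that the inner minimization in \eqref{eq:dual:function} may be upper bounded by plugging in the specific measure $\hat{\mu}$, which is immediate since $\hat{\mu}$ lies in the feasible set $\mathcal{P}(\K)$ of that unconstrained subproblem.
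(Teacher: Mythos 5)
Your proof is correct, and it takes a genuinely different route from the paper's. The paper proves the right-hand inequality by introducing the perturbed family of programs $J^{\star}(\varepsilon) = \min_{\mu}\{\KL{\mu}{\nu} : d(\mathcal{A}\mu,T)\leq\varepsilon\}$ and showing, via a Lagrangian reformulation of the distance constraint together with Sion's minimax theorem, that this perturbation function satisfies $J^{\star}(\varepsilon)\geq J^{\star}-\|z^{\star}\|_{2}\,\varepsilon$; the corollary then follows (implicitly) by noting that $\hat{\mu}$ is feasible for the perturbed program with $\varepsilon = d(\mathcal{A}\hat{\mu},T)$ and invoking $\|z^{\star}\|_{2}\leq C/\delta$. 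You instead evaluate the Lagrangian directly: strong duality and dual attainment give $J^{\star}=F(z^{\star})$, plugging the (possibly infeasible) $\hat{\mu}$ into the inner minimization gives an upper bound, and the residual term $\inprod{\mathcal{A}\hat{\mu}}{z^{\star}}-\sigma_{T}(z^{\star})$ is controlled by projecting onto $T$ and applying Cauchy--Schwarz. The two arguments rest on the same quantitative inputs (zero duality gap from Lemma~\ref{lem:zero:duality:gap}, dual attainment under Assumption~\ref{ass:slater}, and the multiplier bound $\|z^{\star}\|_{2}\leq C/\delta$ from the proof of Theorem~\ref{thm:main:result:inf:dim}), and indeed the paper's Lipschitz sensitivity bound is exactly what your plug-in estimate yields after optimizing over $\mu$. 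What your version buys is economy: it bypasses Sion's theorem and the extra layer of duality for the perturbed constraint, and it makes transparent where the computable quantity $d(\mathcal{A}\hat{\mu},T)$ enters (as the norm of the projection residual paired against $z^{\star}$). What the paper's version buys is a reusable intermediate result — one-sided Lipschitz continuity of the perturbation function $\varepsilon\mapsto J^{\star}(\varepsilon)$ — which is the standard sensitivity statement of convex programming and of independent interest.
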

\begin{proof}
The two key ingredients of the proof are Theorem~\ref{thm:main:result:inf:dim} and the Lipschitz continuity of the so-called perturbation function of convex programming. Let $z^{\star}$ denote the dual optimizer to \eqref{eq:dual:problem}. We introduce the perturbed program as
\begin{subequations}
 \begin{align} 
 	       \quad J^{\star}(\varepsilon) &= \min\limits_{\mu\in\mathcal{P}(\K)} \{ \KL{\mu}{\nu}  \, : \, d(\mathcal{A}\mu,T)\leq \varepsilon  \} \nonumber\\
		&= \min\limits_{\mu\in\mathcal{P}(\K)}  \KL{\mu}{\nu} + \sup\limits_{\lambda\geq 0} \inf_{y\in T} \lambda \| \mathcal{A}\mu-y \| - \lambda \varepsilon \nonumber\\
		&= \sup\limits_{\lambda\geq 0} - \lambda \, \varepsilon + \inf_{\substack{\mu\in\mathcal{P}(\K)\\  y\in T}} \sup_{\| z \|_{2}\leq \lambda} \inprod{\mathcal{A}\mu-y}{z} + \KL{\mu}{\nu} \label{eq:cor:step:slater:cts} \\
		&=  \sup_{\substack{\lambda\geq 0 \\ \| z \|_{2}\leq \lambda}} - \lambda\, \varepsilon + \inf_{\substack{\mu\in\mathcal{P}(\K)\\  y\in T}} \inprod{\mathcal{A}\mu-y}{z} + \KL{\mu}{\nu}\label{eq:cor:step:sion:cts}\\
		&\geq  -\| z^{\star} \|_{2}\, \varepsilon + \inf_{\substack{\mu\in\mathcal{P}(\K)\\  y\in T}} \inprod{\mathcal{A}\mu-y}{z^{\star}} + \KL{\mu}{\nu} \nonumber \\
		& = -\| z^{\star} \|_{2} \, \varepsilon + J^{\star}.  \nonumber
 	\end{align}
	\end{subequations}
Equation~\eqref{eq:cor:step:slater:cts} uses the strong duality property that follows by the existence of a Slater point that is due to the definition of the set $T$, see Section~\ref{sec:problem:statement}. Step \eqref{eq:cor:step:sion:cts} follows by Sion's minimax theorem~\cite[Theorem~4.2]{ref:Sion-58}. Hence, we have shown that the perturbation function is Lipschitz continuous with constant $\| z^{\star} \|_{2}$. Finally, recalling $\| z^\star \|_2 \leq \frac{C}{\delta}$, established in the proof of Theorem~\ref{thm:main:result:inf:dim} completes the proof.
\end{proof}

\begin{remark}[Stopping criterion of Algorithm~\hyperlink{algo:1}{1}] \label{remark:stopping}
There are two alternatives for defining a stopping criterion for Algorithm~\hyperlink{algo:1}{1}. Choose desired accuracy $\varepsilon>0$.
\begin{enumerate}[(i)]
\item \emph{a-priori stopping criterion}: Theorem~\ref{thm:main:result:inf:dim} provides the required number of iterations $N(\varepsilon)$ to ensure an $\varepsilon$-close solution.
\item \emph{a-posteriori stopping criterion}: Choose the smoothing parameter $\eta$ as in \eqref{eq:definitions:algo:cont}. Fix a (small) number of iterations $\ell$ that are run using Algorithm~\hyperlink{algo:1}{1}. Compute the a-posteriori error $\KL{\hat{\mu}}{\nu} + \frac{C}{\delta} d(\mathcal{A}\hat{\mu},T)  - F(\hat{z})$ according to Corollary~\ref{Cor:aposteriori:infinite:dim} and if it is smaller than If $\varepsilon$ terminate the algorithm. Otherwise continue with another $\ell$ iterations.
\end{enumerate}
\end{remark}

\begin{remark}[Slater point computation] \label{rem:slater:point}
To compute the respective constants in Assumption~\ref{ass:slater}, we need to construct a strictly feasible point for \eqref{eq:main:problem}. For this purpose, we consider a polynomial density of degree $r$ defined as $p_r(\alpha,x) := \sum_{i=0}^{r-1} \alpha_i x^i$. For notational simplicity we assume that the support set is the unit interval ($\K = [0,1]$), such that the moments induced by the polynomial density are given by
\begin{align*}
\inprod{p_r(\alpha,x)}{x^i} = \int_0^1 \sum_{j=0}^{r-1} \alpha_j x^{j+i} \drv x= \sum_{j=0}^{r-1} \frac{\alpha_j}{j+i+1},
\end{align*}
for $i=0,\hdots, M$. Consider $\beta\in\R^{M+1}$, where $\beta_1 = 1$ and $\beta_i=y_{i-1}$ for $i=2,\hdots,M+1$. Hence, the feasibility requirement of \eqref{eq:main:problem} can be expressed as the linear constraint $A \alpha = \beta$,
where $A\in\R^{(M+1)\times r}$, $\alpha\in\R^r$, $\beta\in\R^{M+1}$ and $A_{i,j} = \frac{1}{i+j-1}$
and finding a strictly feasible solution reduces to the following feasibility problem
\begin{align} \label{primal-n} 
 \left\{ \begin{array}{ll}
		\max\limits_{\alpha \in \R^r} & \text{const}   \\
		\st & A\alpha = \beta \\
		& p_r(\alpha,x) \geq 0 \quad \forall x\in[0,1], 
\end{array} \right.
\end{align}
where $p_r$ is a polynomial function in $x$ of degree $r$ with coefficients $\alpha$. 
The second constraint of the program \eqref{primal-n} (i.e., $p_r(\alpha,x) \geq 0 \ \forall x\in[0,1]$)\footnote{In a multi-dimensional setting one has to consider a tightening (i.e., $p_r(\alpha,x) >0 \ \forall x\in[0,1]^n$).} can be equivalently reformulated as linear matrix inequalities of dimension $\ceil{\frac{r}{2}}$, using a standard result in polynomial optimization, see \cite[Chapter~2]{ref:Lasserre-11} for details.
We note that for small degree $r$, the set of feasible solutions to problem \eqref{primal-n} may be empty, however, by choosing $r$ large enough and assuming that the moments can be induced by a continuous density, problem \eqref{primal-n} becomes feasible. Moreover, if $0 \in \mathsf{int}(T)$ the Slater point leads to a $\delta>0$ in Assumption~\ref{ass:slater}.
\end{remark}

\begin{example}[Density estimation] \label{ex:Example1}
We are given the first $3$ moments of an unknown probability measure defined on $\K=[0,1]$ as\footnote{The considered moments are actually induced by the probability density $f(x):=(\ln 2 \ (1+x))^{-1}$. We, however, do not use this information at any point of this example.}
\begin{align*}
y:=\left( \frac{1-\ln 2}{\ln 2}, \frac{\ln 4 - 1}{\ln 4}, \frac{5-\ln 64}{\ln 64} \right) \approx (0.44,\ 0.28,\ 0.20).
\end{align*}
The uncertainty set in the measurement model \eqref{eq:measurement:model} is assumed to be $\mathcal{U}_{i}=[-u,u]$ for all $i=1,\hdots,3$. A Slater point is constructed using the method described in Remark~\ref{rem:slater:point}, where $r=5$ is enough for the problem~\eqref{primal-n} to be feasible, leading to the constant $C=0.0288$. The Slater point is depicted in Figure~\ref{fig:plotExperimentMeanVar1} and its differential entropy can be numerically computed as $-0.0288$.

We consider two  simulations for two different uncertainty sets (namely, $u=0.01$ and $u=0.005$). 
The underlying maximum entropy problem \eqref{eq:main:problem:special case} is solved using Algorithm~\hyperlink{algo:1}{1}. The respective features of the a-priori guarantees by Theorem~\ref{thm:main:result:inf:dim} as well as the a-posteriori guarantees (upper and lower bounds) by Corollary~\ref{Cor:aposteriori:infinite:dim} are reported in Table~\ref{tab:ex1}. 
Recall that $\hat{\mu}_{k(\varepsilon)}$ denotes the approximate primal variable after $k$-iterations of Algorithm~\hyperlink{algo:1}{1} as defined in Theorem~\ref{thm:main:result:inf:dim} and that $d(\mathcal{A}\hat{\mu}_{k(\varepsilon)},T)$ (resp.~$\frac{2\varepsilon \delta}{C}$) represent the a-posteriori (resp.~a-priori) feasibility guarantees.
It can be seen in Table~\ref{tab:ex1} that increasing the uncertainty set $\mathcal{U}$ leads to a higher entropy, where the uniform density clearly has the highest entropy. This is also intuitively expected since enlarging the uncertainty set is equivalent to relaxing the moment constraints in the respective maximum entropy problem. The corresponding densities are graphically visualized in Figure~\ref{fig:plotExperimentMeanVar1}.

 \begin{table}[!htb]
\centering 
\caption{Some specific simulation points of Example~\ref{ex:Example1}. }
\label{tab:ex1}
\hspace{40mm} $\mathcal{U}=[-0.01,0.01]$ \hspace{29mm} $\mathcal{U}=[-0.005,0.005]$
\vspace{3mm} \phantom{..}
  \begin{tabular}{c@{\hskip 1.5mm} | c@{\hskip 1.5mm} c@{\hskip 1.5mm} c@{\hskip 1.5mm} c  | c@{\hskip 1.5mm} c@{\hskip 1.5mm} c@{\hskip 1.5mm} c  }
 a-priori error $\varepsilon$ \hspace{1mm}  & \hspace{1mm}    1  &$0.1$ & $0.01$ & $0.001$ \hspace{1mm}   &\hspace{1mm}  1  &$0.1$ & $0.01$ & $0.001$   \\ 
 $J_{\textnormal{UB}}$ & \hspace{1mm}-0.0174 & -0.0189 & -0.0194  & -0.0194 \hspace{1mm}   &\hspace{1mm}  -0.0223 & -0.0236 & -0.0237 & -0.0238   \\
 $J_{\textnormal{LB}}$ & \hspace{1mm} -0.0220 & -0.0279 & -0.0204  & -0.0195 \hspace{1mm}   & \hspace{1mm}   -0.0263 & -0.0298 & -0.0244 & -0.0238 \\
 iterations $k(\varepsilon)$ & \hspace{1mm} 99 & 551 & 5606  & 74423 \hspace{1mm}  & \hspace{1mm}   232 & 1241 & 12170 & 157865 \\
 $d(\mathcal{A}\hat{\mu}_{k(\varepsilon)},T)$& \hspace{1mm} 0.0008  &0.0036 &0.0005  & 0 \hspace{1mm} & \hspace{1mm}  0 & 0.001 & 0.0001 & 0  \\
  $\frac{2\varepsilon \delta}{C}$ & \hspace{1mm} 0.69  &0.069 &0.0069  & 0.00069 \hspace{1mm} & \hspace{1mm}  0.35 & 0.035 & 0.0035 & 0.00035 \\
   runtime [s]\tablefootnote{Runtime includes Slater point computation. Simulations were run with Matlab on a laptop with a 2.2 GHz Intel Core i7 processor.} & \hspace{1mm}  1.4 &1.4 & 2.3 & 12.9 \hspace{1mm} & \hspace{1mm}1.4 & 1.5 & 3.3 & 26.1 
  \end{tabular}
\end{table}

\begin{figure}[!htb]   
\centering
    {\input{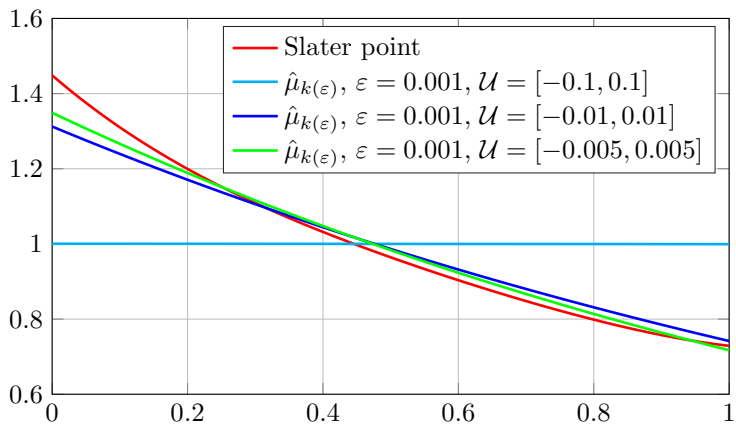} \label{fig:firstExperiment}}
    \caption[]{Maximum entropy densities obtained by Algorithm~\hyperlink{algo:1}{1} for two different uncertainty sets. As a reference, the Slater point density, that was computed as described in Remark~\ref{rem:slater:point} is depicted in red.}
    \label{fig:plotExperimentMeanVar1}
\end{figure}

%

%
%
%
%

\end{example}

\section{Finite-dimensional case} \label{sec:finite:dim:case}
We consider the finite-dimensional case where $\K=\{1,\hdots,N\}$ and hence we optimize in \eqref{eq:main:problem} over the probability simplex $\mathcal{P}(\K)=\Delta_{N}$. One substantial simplification, when restricting to the finite-dimensional setting, is that the Shannon entropy is non-negative and bounded from above (by $\log N$). 
Therefore, we can substantially weaken Assumption~\ref{ass:slater} to the following assumption.

\begin{myass}[Regularity] \label{ass:finite:dim} \
\begin{enumerate}[(i)]
\item There exists $\delta>0$ such that $B(0,\delta)\subset\{x-\mathcal{A}\mu \ : \ \mu\in\Delta_{N}, x\in T\}$.
\item The reference measure $\nu\in\Delta_{N}$ has full support, i.e., $\min\limits_{1\leq i \leq N} \nu_{i}>0$.
\end{enumerate}
\end{myass}

Consider the the definitions given in \eqref{eq:definitions:algo:cont} with $C:=\max\limits_{1\leq i \leq N} \log \frac{1}{ \nu_{i}}$, then the following finite-dimensional equivalent to Theorem~\ref{thm:main:result:inf:dim} holds.

\begin{corollary}[A-priori error bound]\label{cor:main:result:finite:dim}
Given Assumption~\ref{ass:finite:dim}, $C:=\max\limits_{1\leq i \leq N} \log \frac{1}{ \nu_{i}}$ and the definitions~\eqref{eq:definitions:algo:cont}, let $\varepsilon>0$ and $N(\varepsilon) := \left \lceil \{ N_1(\varepsilon) , N_2(\varepsilon) \} \right \rceil$. Then, $N(\varepsilon)$ iterations of Algorithm~\hyperlink{algo:1}{1} produce the approximate solutions to the problems
\eqref{eq:dual:problem} and \eqref{eq:primal:problem}, given by
\begin{equation} \label{eq_varAlgo}
\hat{z}_{k(\varepsilon)}:=y_{k(\varepsilon)} \quad \text{and} \quad \hat{\mu}_{k(\varepsilon)}(B) := \frac{\sum_{i\in B}2^{-\left(\mathcal{A}^*\hat{z}_{k(\varepsilon)}\right)_i}\nu_i}{\sum_{i=1}^N 2^{-\left(\mathcal{A}^*\hat{z}_{k(\varepsilon)}\right)_i}\nu_i} \quad \text{for all }B\subset \{1,2,\hdots,N\} \, ,
\end{equation}
which satisfy
\begin{subequations}
\begin{alignat}{3}
&\text{dual $\varepsilon$-optimality:}\hspace{20mm} &&0\leq F(\hat{z}_{k(\varepsilon)})-J^{\star}\leq \varepsilon \label{eq:thm:dual:optimality} \\
&\text{primal $\varepsilon$-optimality:}\hspace{25mm} &&|\KL{\hat{\mu}_{k(\varepsilon)}}{\nu}- J^{\star} | \leq  2(1+2\sqrt{3})\varepsilon \label{eq:thm:primal:optimality} \\
&\text{primal $\varepsilon$-feasibility:}\hspace{20mm} &&d(\mathcal{A}\hat{\mu}_{k(\varepsilon)},T)\leq \frac{2\varepsilon\delta}{C} \, , \label{eq:thm:primal:feasibility}
\end{alignat}
\end{subequations}
where $d(\cdot,T)$ denotes the distance to the set $T$, i.e., $d(x,T):=\min_{y\in T}\|x-y\|_2$.
\end{corollary}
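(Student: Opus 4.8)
The plan is to follow the proof of Theorem~\ref{thm:main:result:inf:dim} essentially verbatim, isolating the only two places where the Slater-point Assumption~\ref{ass:slater} entered and showing that each of them can be supplied by the weaker Assumption~\ref{ass:finite:dim} once the objective is the finite-dimensional relative entropy on $\Delta_{N}$. Recall that in Theorem~\ref{thm:main:result:inf:dim} the Slater point was used (i)~to bound the dual function and the optimal value by the constant $C$ through weak duality, and (ii)~to bound the norm of the optimal dual multiplier by $\iota=C/\delta$ via \cite[Lemma~1]{ref:Nedic-08}. Since Lemmas~\ref{lem:zero:duality:gap} and~\ref{lem:lipschitz:cts:gradient} already provide the zero duality gap and the Lipschitz-gradient/strong-concavity structure of $F_{\eta}$, these two bounds are precisely the inputs needed to transfer the error estimates of \cite{ref:devolder-12}.

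For the first ingredient I would replace the a-priori bound $\KL{\mu_0}{\nu}=C$ by the observation that on the simplex the relative entropy is globally bounded. Writing $\KL{\mu}{\nu}=-H(\mu)-\sum_{i=1}^{N}\mu_i\log\nu_i$ and using $H(\mu)\geq 0$ together with Assumption~\ref{ass:finite:dim}(ii) (full support of $\nu$) gives $\KL{\mu}{\nu}\leq \max_{1\leq i\leq N}\log\tfrac{1}{\nu_i}=C$ for every $\mu\in\Delta_{N}$. Consequently $J^{\star}\leq C$, and by weak duality $F(z)\leq J^{\star}\leq C$ for all $z\in\R^{M}$, which is exactly the bound used in Theorem~\ref{thm:main:result:inf:dim} but now obtained without exhibiting any particular feasible point.

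For the second ingredient I would use Assumption~\ref{ass:finite:dim}(i): the inclusion $B(0,\delta)\subset\{x-\mathcal{A}\mu : \mu\in\Delta_{N},\ x\in T\}$ furnishes a uniform feasibility margin $\delta$ in the image of the constraint map, playing the role that $\min_{y\in T^{c}}\|\mathcal{A}\mu_0-y\|_2$ played before. Combining this margin with the global objective bound $C$ and invoking the sensitivity estimate \cite[Lemma~1]{ref:Nedic-08} (equivalently, repeating the perturbation-function argument of Corollary~\ref{Cor:aposteriori:infinite:dim}) yields $\|z^{\star}\|_2\leq C/\delta$. With both $F(z)\leq C$ and $\|z^{\star}\|_2\leq C/\delta$ in hand, the estimates \eqref{eq:thm:dual:optimality}--\eqref{eq:thm:primal:feasibility} follow from \cite{ref:devolder-12} through the identical computation as in Theorem~\ref{thm:main:result:inf:dim}, the optimizers~\eqref{eq_varAlgo} being the finite-dimensional specialization of~\eqref{eq:estimates:primal:dual}.

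I expect the main obstacle to be the rigorous verification that the geometric condition in Assumption~\ref{ass:finite:dim}(i) genuinely reproduces the multiplier bound $C/\delta$: unlike the infinite-dimensional case, where a concrete Slater point $\mu_0$ directly fed \cite[Lemma~1]{ref:Nedic-08}, here one must argue that a ball in the set of achievable constraint residuals $x-\mathcal{A}\mu$ is equivalent, for the purpose of bounding $\|z^{\star}\|_2$, to a strictly feasible point with margin $\delta$ and suboptimality at most $C$. Everything else is a transcription of the earlier proof.
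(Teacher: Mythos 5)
Your proposal is correct and follows essentially the same route as the paper: the paper's proof likewise consists of establishing the dual-multiplier bound $\|z^{\star}\|_2\leq C/\delta$ from Assumption~\ref{ass:finite:dim} (citing \cite[Theorem~6.1]{ref:devolder-12} for this step, where you invoke the Nedi\'c-type sensitivity estimate / perturbation argument) and then rerunning the argument of Theorem~\ref{thm:main:result:inf:dim} with the new constants $C$ and $\delta$, exactly as you describe. The verification you flag as the main obstacle is precisely what the paper outsources to that citation, and it closes in one line: for any $u$ with $\|u\|_2\leq\delta$, Assumption~\ref{ass:finite:dim}(i) gives $\mu_u\in\Delta_N$ and $x_u\in T$ with $x_u-\mathcal{A}\mu_u=u$, whence $J^{\star}=F(z^{\star})\leq \KL{\mu_u}{\nu}+\inprod{\mathcal{A}\mu_u-x_u}{z^{\star}}\leq C-\inprod{u}{z^{\star}}$, and choosing $u=\delta z^{\star}/\|z^{\star}\|_2$ yields $\|z^{\star}\|_2\leq (C-J^{\star})/\delta\leq C/\delta$ since $J^{\star}\geq 0$.
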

\begin{proof}
Under Assumption~\ref{ass:finite:dim} the dual optimal solutions in \eqref{eq:dual:problem} are bounded by 
\begin{align} \label{eq:dual:var:bound}
\| z^{\star} \| \leq \frac{1}{r}\max\limits_{1\leq i \leq N} \log \frac{1}{ \nu_{i}} \, .
\end{align}
This bound on the dual optimizer follows along the lines of~\cite[Theorem~6.1]{ref:devolder-12}. The presented error bounds can then be derived along the lines of Theorem~\ref{thm:main:result:inf:dim}.
\end{proof}
In addition to the explicit error bound provided by Corollary~\ref{cor:main:result:finite:dim}, the a-posteriori upper and lower bounds presented in Corollary~\ref{Cor:aposteriori:infinite:dim} directly apply to the finite-dimensional setting as well.

\section{Gradient Approximation} \label{sec:gradient:evalutaion}
The computationally demanding element for Algorithm~\hyperlink{algo:1}{1} is the evaluation of the gradient $\nabla F_{\eta}(\cdot)$ given in Lemma~\ref{lem:lipschitz:cts:gradient}. In particular, Theorem~\ref{thm:main:result:inf:dim} and Corollary~\ref{cor:main:result:finite:dim} assume that this gradient is known exactly. While this is not restrictive if, for example, $\K$ is a finite set, in general, $\nabla F_{\eta}(\cdot)$ involves an integration that can only be computed approximately. In particular if we consider a multi-dimensional setting (i.e., $\K\subset \R^d$), the evaluation of the gradient $\nabla F_{\eta}(\cdot)$ represents a multi-dimensional integration problem. This gives rise to the question of how the fast gradient method (and also Theorem~\ref{thm:main:result:inf:dim}) behaves in a case of inexact first-order information. Roughly speaking, the fast gradient method Algorithm~\hyperlink{algo:1}{1}, while being more efficient than the classical gradient method (if applicable), is less robust when dealing with inexact gradients \cite{ref:Devolder-13}. Therefore, depending on the computational complexity of the gradient, one may consider the possibility of replacing Algorithm~\hyperlink{algo:1}{1} with a classical gradient method. A detailed mathematical analysis of this tradeoff is a topic of further research, and we refer the interested readers to \cite{ref:Devolder-13} for further details in this regard.

In this section we discuss two numerical methods to approximate this gradient. Note that in Lemma~\ref{lem:lipschitz:cts:gradient}, given that $T$ is simple enough the optimizer $x^{\star}_{z}$ is analytically available, so what remains is to compute $\mathcal{A}\mu^{\star}_{z}$, that according to Lemma~\ref{lem:entropy:max} is given by
\begin{align} \label{eq:gradient:approximation}
(\mathcal{A}\mu^{\star}_{z})_{i} &= \frac{\int_{\K} x^{i}2^{-\mathcal{A}^{*}z(x)}\nu(\drv x)}{\int_{\K} 2^{-\mathcal{A}^{*}z(x)}\nu(\drv x)} \quad\text{ for all }i=1,\hdots, M \, .
\end{align}

%
%
%
%

\textbf{Semidefinite programming.} Due to the specific structure of the considered problem, \eqref{eq:gradient:approximation} represents an integration of exponentials of polynomials for which an efficient approximation in terms of two single semidefinite programs (SDPs) involving two linear matrix inequalities has been derived, where the desired accuracy is controlled by the size of the linear matrix inequalities constraints, see~\cite{ref:Bertsimas-08, ref:Lasserre-11} for a comprehensive study and for the construction of those SDPs.
While the mentioned hierarchy of SDPs provides a certificate of optimality (hat is easy to evaluate and asymptotic convergence (in the size of the SDPs), a convergence rate that explicitly quantifies the size of the SDPs required for a desired accuracy is unknown. In practice, the hierarchy often converges in few iteration steps, which however, depends on the problem and is not known a priori.

\textbf{Quasi-Monte Carlo.}
The most popular methods for integration problems of the from \eqref{eq:gradient:approximation} are Monte Caro (MC) schemes, see \cite{ref:RobertCasella-04} for a comprehensive summary. The main advantage of MC methods is that the root-mean-square error of the approximation converges to $0$ with a rate of $O(N^{-1/2})$ that is independent of the dimension, where $N$ are the number of samples used.
In practise, this convergence often is too slow. Under mild assumptions on the integrand, the MC methods can be significantly improved with a more recent technique known as Quasi-Monte Carlo (QMC) methods. QMC methods can reach a convergence rate arbitrarily close to $O(N^{-1})$ with a constant not depending on the dimension of the problem. We would like to refer the reader to \cite{ref:Dick-13, Sloan-98, Sloan-05, niederreiter2010quasi, MT_Maxime} for a detailed discussion about the theory of QMC methods.

\begin{remark}[Computational stability] \label{rem:computational:stability}
The evaluation of the gradient in Lemma~\ref{lem:lipschitz:cts:gradient} involves the term $\mathcal{A}\mu^{\star}_{z}$, where $\mu^{\star}_{z}$ is the optimizer of the second term in \eqref{eq:dual:function:smoothed}. By invoking Lemma~\ref{lem:entropy:max} and the definition of the operator $\mathcal{A}$, the gradient evaluation reduces to
\begin{equation} \label{eq:rem:stable}
\left(\mathcal{A}\mu^{\star}_{z} \right)_i = \frac{\int_\K x^i 2^{-\sum_{j=1}^M z_j x^j}\drv x}{\int_\K 2^{-\sum_{j=1}^M z_j x^j}\drv x} \quad \text{for }i=1,\hdots,M \, .
\end{equation}
Note that a straightforward computation of the gradient via \eqref{eq:rem:stable} is numerically difficult. To alleviate this difficulty, we follow the suggestion of \cite[p.~148]{nesterov05} which we briefly elaborate here. Consider the functions $f(z,x):= -\sum_{j=1}^M z_j x^j $, $\bar{f}(z):=\max_{x\in\K} f(z,x)$ and $g(z,x):= f(z,x)- \bar{f}(z)$. Note, that  $g(z,x)\geq 0$ for all $(z,x)\in \R^M\times\R$. One can show that
\begin{equation*} 
\left(\mathcal{A}\mu^{\star}_{z} \right)_i = \frac{\int_\K  2^{g(z,x)} \frac{\partial}{\partial z_i}g(z,x)\drv x}{\int_\K 2^{g(z,x)}\drv x} + \frac{\partial}{\partial z_i}\bar{f}(z) \quad \text{for }i=1,\hdots,M \, ,
\end{equation*}
which can be computed with significantly smaller numerical error than \eqref{eq:rem:stable} as the numerical exponent are always negative, leading to values always being smaller than $1$. 
\end{remark}

\section{Zero-information moment closure method} \label{sec:moment:closure}

In chemistry, physics, systems biology and related fields, stochastic chemical reactions are described by the \emph{chemical master equation} (CME), that is a special case of the Chapman-Kolmogorov equation as applied to Markov processes~\cite{ref:vanKampen-81, ref:Wilkinson-06}.
These equations are usually infinite-dimensional and analytical solutions are generally impossible. Hence, effort has been directed toward developing of a variety of numerical schemes for efficient approximation of the CME, such as stochastic simulation techniques (SSA)~\cite{ref:Gillespie-76}. In practical cases, one is often interested in the first few moments of the number of molecules involved in the chemical reactions. This motivated the development of approximation methods to those low-order moments without having to solve the underlying infinite-dimensional CME. One such approximation method is the so-called \emph{moment closure method}~\cite{ref:Gillespie-09}, that briefly described works as follows: First the CME is recast in terms of moments as a linear ODE of the form
\begin{equation} \label{eq:moment:CME}
\frac{\drv}{\drv t}\mu(t) =  A \mu(t) + B \zeta(t)\, ,
\end{equation}
where $\mu(t)$ denotes the moments up to order $M$ at time $t$ and $\zeta(t)$ is an infinite vector describing the contains moments of order $M+1$ or higher. 
In general $\zeta$ can be an infinite vector, but for most of the standard chemical reactions considered in, e.g., systems biology it turns out that only a finite number of higher order moments affect the evolution of the first $M$ moments. Indeed, if the chemical system involves only the so-called zeroth and first order reactions the vector $\zeta$ has dimension zero (reduces to a constant affine term), whereas if the system also involves second order reactions then $\zeta$ also contains some moments of order $M+1$ only. It is widely speculated that reactions up to second order are sufficient to realistically model most systems of interest in chemistry and biology \cite{ref:Gillespie-07, ref:Gillespie-13}.
The matrix $A$ and the linear operator $B$ (that may potentially be infinite-dimensional) can be found analytically from the CME. The ODE \eqref{eq:moment:CME}, however, is intractable due to its higher order moments dependence. The approximation step is introduced by a so-called closure function
\begin{equation*}
\zeta=\varphi(\mu)\, ,
\end{equation*}
where the higher-order moments are approximated as a function of the lower-oder moments, see \cite{ref:Singh-06,ref:Singh-07}. A closure function that has recently attracted interest is known as the \emph{zero-information} closure function (of order $M$)~\cite{ref:Smadbeck-13}, and is given by
\begin{equation} \label{eq:closure:function}
(\varphi(\mu))_i = \inprod{p^\star_\mu}{x^{M+i}}, \text{ for } i=1,2,\hdots\, ,
\end{equation}
where $p^\star_\mu$ denotes the maximizer to the problem \eqref{eq:main:problem:special case}, where $T=\times_{i=1}^M \left\{ \inprod{\mu}{x^i}-u \ : \ u\in\mathcal{U}_i \right\}$, where $\mathcal{U}_i=[-\kappa, \kappa]\subset\R$ for all $i$ and for a given $\kappa>0$, that acts as a regularizer, in the sense of Assumption~\ref{ass:slater}.
This approximation reduces the infinite-dimensional ODE \eqref{eq:moment:CME} to a finite-dimensional ODE
\begin{equation} \label{eq:finite:ODE}
\frac{\drv}{\drv t}\mu(t) =  A \mu(t) + B \varphi(\mu(t))\, .
\end{equation}

To numerically solve \eqref{eq:finite:ODE} it is crucial to have an efficient evaluation of the closure function $\varphi$. In the zero-information closure scheme this is given by to the entropy maximization problem \eqref{eq:main:problem:special case} and as such can be addressed using Algorithm~\hyperlink{algo:1}{1}.


\begin{figure}[!htb]
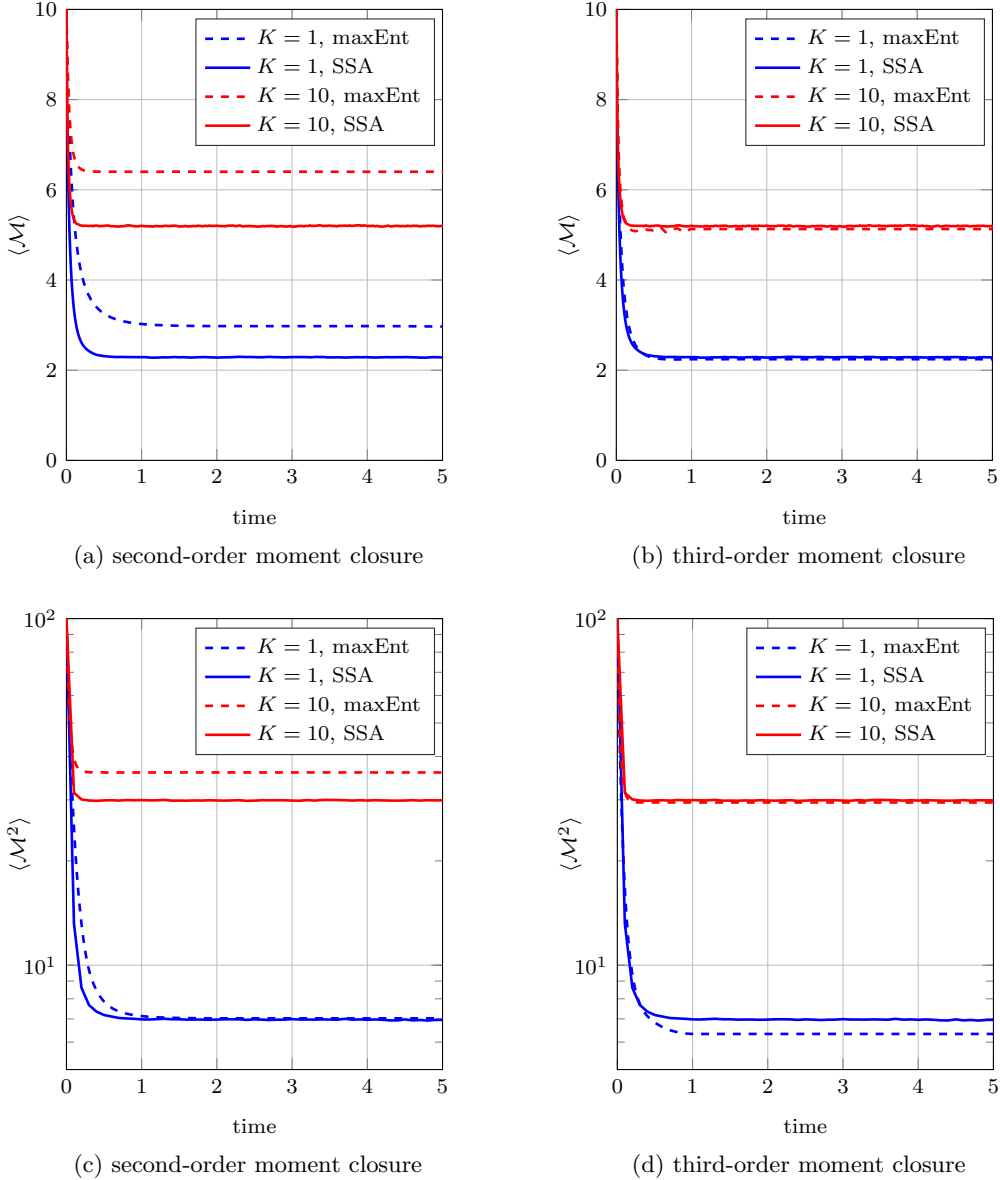
 
\centering
    \subfloat[second-order moment closure]{\input{secondOrder.tex} \label{fig:secondOrder} } \hspace{10mm}
    \subfloat[third-order moment closure]{\input{thirdOrder.tex} \label{fig:thirdOrder} } \\
     \subfloat[second-order moment closure]{\input{secondOrder_secondMoment.tex} \label{fig:secondOrder:secondMoment} } \hspace{10mm}
    \subfloat[third-order moment closure]{
%
%
\begin{tikzpicture}

\begin{axis}[%
height=6.0cm,
width=5cm,
at={(1.011111in,0.641667in)},
scale only axis,
grid=major,
style={font=\scriptsize},
y label style={at={(axis description cs:0.12,.5)}},
xmin=0,
xmax=5,
xlabel={time},
ylabel={$\langle \mathcal{M}^2\rangle$},
ymin=5,
ymax=100,
ymode=log,
xmode=linear,
legend style={legend cell align=left,align=left,draw=white!15!black}
]


\addplot [color=blue,dashed,line width=1.0pt]
  table[row sep=crcr]{%
0	100\\
0.00247348597181315	92.4303194890925\\
0.00735048582743701	79.7756597419882\\
0.0123278178962352	69.3763592898415\\
0.0174980692934942	60.6271824979529\\
0.0229267736098641	53.1635322315972\\
0.0286566963140782	46.7526157518448\\
0.0347325413593649	41.2158769059201\\
0.0412042415823096	36.4123321323829\\
0.0481269628495207	32.2302481477056\\
0.0555607717361404	28.5807325173315\\
0.0635701996395642	25.3925783149999\\
0.0722242536485956	22.6079613716804\\
0.0815976352908185	20.1788557170896\\
0.0917725906976913	18.0644495400862\\
0.102841854737132	16.2292809556737\\
0.114911882161327	14.642086532445\\
0.128107235703439	13.2749865111902\\
0.142576111877736	12.1029436598612\\
0.158497230331308	11.1033923303349\\
0.176088858004435	10.2559389107605\\
0.195620873305622	9.5420950034733\\
0.217431807256255	8.9450074848086\\
0.241955222520164	8.44916197827403\\
0.26976155435028	8.04010814873858\\
0.301621452972396	7.70427721753663\\
0.338620939864165	7.42876226411267\\
0.382348154600584	7.2005243259817\\
0.428873912780483	7.02327974189987\\
0.500593103370028	6.82173365550059\\
0.600593103370028	6.62647062575438\\
0.700593103370028	6.49392463738132\\
0.800593103370028	6.40525390121874\\
0.900593103370028	6.35267801701014\\
1	6.33463914589916\\
5       6.33463914589916\\
};
\addlegendentry{$K=1$, maxEnt};

\addplot [color=blue,solid,line width=1.0pt]
  table[row sep=crcr]{%
0	100\\
0.1	13.1846762110879\\
0.2	8.60518069868365\\
0.3	7.67705910115536\\
0.4	7.34545603269304\\
0.5	7.18353210621548\\
0.6	7.1114579395795\\
0.7	7.04235168650224\\
0.8	7.0229051606718\\
0.9	7.00530140166039\\
1	6.97863980223658\\
1.1	6.97242185713404\\
1.2	6.98942429059647\\
1.3	7.00643603998493\\
1.4	6.98334311510182\\
1.5	6.9831067546736\\
1.6	6.96556439090907\\
1.7	6.97227305231403\\
1.8	6.96918606625087\\
1.9	6.96020496539274\\
2	6.96809164733651\\
2.1	6.98224668882705\\
2.2	6.98342464797445\\
2.3	6.97864265030472\\
2.4	6.95555347531089\\
2.5	6.97917716800895\\
2.6	6.96680259550642\\
2.7	6.94904421478716\\
2.8	6.96503539423505\\
2.9	6.96706152631044\\
3	6.98958444613472\\
3.1	6.97143117396451\\
3.2	6.97142143465491\\
3.3	6.96786581964804\\
3.4	6.97759320887489\\
3.5	6.97023878683558\\
3.6	6.96366759608914\\
3.7	7.00368375292945\\
3.8	6.9650680764096\\
3.9	6.97689139280527\\
4	6.96028149587453\\
4.1	6.97621009089989\\
4.2	6.96055792892175\\
4.3	6.93364512458152\\
4.4	6.93009790531627\\
4.5	6.97740664964208\\
4.6	6.93255333039614\\
4.7	6.95181937901597\\
4.8	6.92442013912225\\
4.9	6.95202987137574\\
5	6.95736862670111\\
};
\addlegendentry{$K=1$, SSA};


\addplot [color=red,dashed,line width=1.0pt, each nth point={1}]
  table[row sep=crcr]{%
0	100\\
0.0011438505922504	96.4104102370032\\
0.0022877011845008	93.0423386769572\\
0.0034315517767512	89.8795154069735\\
0.0045754023690016	86.9068855146559\\
0.00744852627070453	80.1839856789375\\
0.0103216501724075	74.3946946742465\\
0.0131947740741104	69.3956201377125\\
0.0160678979758133	65.0518823930898\\
0.0190932302350526	61.0645677574669\\
0.0221185624942919	57.5974037885953\\
0.0251438947535313	54.5747021757256\\
0.0281692270127706	51.9260084977718\\
0.0316646866544118	49.255049453673\\
0.0351601462960529	46.9447987254587\\
0.0386556059376941	44.941976506034\\
0.0421510655793353	43.1964153713399\\
0.046197509507085	41.4426925458071\\
0.0502439534348347	39.9364590486322\\
0.0542903973625844	38.6405954313525\\
0.0583368412903341	37.5196705256296\\
0.0631029909079174	36.384216430406\\
0.0678691405255006	35.4210358146173\\
0.0726352901430838	34.6033958239671\\
0.077401439760667	33.9052810012997\\
0.0831080943944591	33.1968366625455\\
0.0888147490282511	32.6074292233886\\
0.0945214036620432	32.1174968954863\\
0.100228058295835	31.7074910374806\\
0.107181820724837	31.2929955383042\\
0.11413558315384	30.9580372353421\\
0.121089345582842	30.6884311524954\\
0.128043108011844	30.469408154333\\
0.136665685200106	30.250488960823\\
0.145288262388369	30.080930328584\\
0.153910839576631	29.9510699148845\\
0.162533416764893	29.8500049502439\\
0.173388410595362	29.7507238450427\\
0.184243404425831	29.6783746274986\\
0.1950983982563	29.6273947477368\\
0.205953392086768	29.5901018363626\\
0.219750911047443	29.5534235192299\\
0.233548430008118	29.5289773619351\\
0.247345948968793	29.514723606201\\
0.261143467929468	29.5052536185119\\
0.278644351428597	29.4934069119628\\
0.296145234927726	29.4868140478167\\
0.313646118426856	29.4865358204288\\
0.331147001925985	29.486484846078\\
0.351557031744073	29.4716457773141\\
0.371967061562161	29.4672731742283\\
0.392377091380249	29.4862527798329\\
0.412787121198337	29.4958719712177\\
0.424241845468213	29.4858839181489\\
0.435696569738089	29.4810328676897\\
0.447151294007965	29.4830917618974\\
0.458606018277841	29.4849538792234\\
0.470060742547717	29.4820492764798\\
0.481515466817593	29.4806441808781\\
0.492970191087469	29.4812645321659\\
0.504424915357345	29.4818204746261\\
0.519939489865311	29.4795093665395\\
0.535454064373278	29.4787093408972\\
0.550968638881244	29.4810387088347\\
0.566483213389211	29.4824864487988\\
0.58534750253132	29.4727475764936\\
0.604211791673428	29.4700747938641\\
0.623076080815537	29.4838215674651\\
0.641940369957646	29.4911751254512\\
0.654115550667265	29.4828262442801\\
0.666290731376883	29.4790595585703\\
0.678465912086502	29.4821714623455\\
0.690641092796121	29.4846203614801\\
0.702816273505739	29.4813957191312\\
0.714991454215358	29.479944494842\\
0.727166634924977	29.4811623568749\\
0.739341815634595	29.4821176520533\\
0.754780591940258	29.4792154281429\\
0.77021936824592	29.4782141238194\\
0.785658144551582	29.4811466934853\\
0.801096920857244	29.4829763028976\\
0.81902212980442	29.4737082937901\\
0.836947338751596	29.471011859682\\
0.854872547698772	29.4833243280892\\
0.872797756645947	29.4901874653486\\
0.885280210502631	29.4820795618251\\
0.897762664359314	29.4785149525111\\
0.910245118215998	29.4820542862879\\
0.922727572072681	29.4847436395811\\
0.935210025929365	29.4811887241214\\
0.947692479786048	29.4796294255005\\
0.960174933642731	29.4811977025947\\
0.972657387499415	29.4823862489473\\
0.979493040624561	29.4817665223727\\
0.986328693749707	29.4813516523387\\
0.993164346874854	29.4810984095674\\
1	29.480931170018\\
5	29.480931170018\\
};  
\addlegendentry{$K=10$, maxEnt};

\addplot [color=red,solid,line width=1.0pt]
  table[row sep=crcr]{%
0	100\\
0.1	31.4360922253912\\
0.2	30.0725916160037\\
0.3	29.867174510949\\
0.4	29.8085674534831\\
0.5	29.8875101161383\\
0.6	29.9171280835894\\
0.7	29.8554049546653\\
0.8	29.961318657568\\
0.9	29.9114373067155\\
1	29.9440136551355\\
1.1	29.8994439277092\\
1.2	29.9031887629185\\
1.3	29.9905792250483\\
1.4	29.8424324944621\\
1.5	29.8536442469341\\
1.6	29.8269212135024\\
1.7	29.8860254628935\\
1.8	29.8550227496348\\
1.9	29.9187223869966\\
2	29.8732687651085\\
2.1	29.8429214658471\\
2.2	29.8852924894355\\
2.3	29.9234429176317\\
2.4	29.7987714454821\\
2.5	29.8427059226136\\
2.6	29.9268353907351\\
2.7	29.9082619796268\\
2.8	29.9157084143231\\
2.9	29.917644225447\\
3	29.8531848649264\\
3.1	29.8216189532455\\
3.2	29.954456293972\\
3.3	29.8539947979522\\
3.4	29.9717836447014\\
3.5	29.9793352215058\\
3.6	29.9574445232326\\
3.7	29.9221668949465\\
3.8	29.8529642202597\\
3.9	29.9253814587457\\
4	29.8585158294713\\
4.1	29.9312268157462\\
4.2	29.8867818345868\\
4.3	29.8548832964391\\
4.4	29.8327445121527\\
4.5	29.864094089698\\
4.6	29.9294031702885\\
4.7	29.8555522103354\\
4.8	29.920183596465\\
4.9	29.8910287356865\\
5	29.8993821031991\\
};
\addlegendentry{$K=10$, SSA};

\end{axis}
\end{tikzpicture}
    \caption[]{Reversible dimerization system~\eqref{eq:reversible:dimerization} with reaction constants $K = \nicefrac{k_2}{k_1}$: Comparison of the zero-information moment closure method \eqref{eq:finite:ODE}, solved using Algorithm~\hyperlink{algo:1}{1} and the average of $10^6$ SSA trajectories. The initial conditions are $\mathcal{M}_0 = 10$ and $\mathcal{D}_0=0$ and the regularization term $\kappa=0.01$.}
    \label{fig:plotExperimentMeanVar}
\end{figure}

To illustrate this point, we consider a reversible dimerisation reaction where two monomers ($\mathcal{M}$) combine in a second-order monomolecular reaction to form a dimer ($\mathcal{D}$); the reverse reaction is first order and involves the decomposition of the dimer into the two monomers. This gives rise to the chemical reaction system
\begin{equation} \label{eq:reversible:dimerization}
\begin{aligned}
2\mathcal{M} & \overset{k_1}{\longrightarrow} \mathcal{D} \\
\mathcal{D}&\overset{k_2}{\longrightarrow} 2\mathcal{M} \, ,
\end{aligned}
\end{equation}
with reaction rate constants $k_1,k_2>0$.
Note that the system as described has a single degree of freedom since $\mathcal{M} = 2\mathcal{D}_0 - 2\mathcal{D} + \mathcal{M}_0$, Where $\mathcal{M}$ denotes the count of the monomers, $\mathcal{D}$ the count of dimers, and $\mathcal{M}_0$ and $\mathcal{D}_0$ the corresponding initial conditions.
Therefore, the matrices can be reduced to include only the moments of one component as a simplification and as such the zero-information closure function \eqref{eq:closure:function} consists of solving a one-dimensional entropy maximization problem such as given by \eqref{eq:main:problem:special case}, where the support are the natural numbers (upper bounded by $\mathcal{M}_0+\mathcal{D}_0$ and hence compact). 
For illustrative purposes, let us look at a second order closure scheme, where the corresponding moment vectors are defined as $\mu=(1, \langle \mathcal{M}\rangle, \langle \mathcal{M}^2\rangle)\transp\in\R^3$ and $\zeta = \langle \mathcal{M}^3 \rangle \in\R$ and the corresponding matrices are given by
\begin{align*}
A = \begin{pmatrix*}[c]
0 & 0 & 0 \\
k_2 S_0 & 2k_1-k_2 & -2k_1 \\
2k_2 S_0 & \quad 2k_2(S_0 -1) - 4k_1 \quad\quad  & 8k_1 -2k_2
\end{pmatrix*}, \quad
B =  \begin{pmatrix*}[c]
0  \\
0  \\
-4k_1
\end{pmatrix*},
\end{align*}
where $S_0 = \mathcal{M}_0 + 2\mathcal{D}_0$. The simulation results, Figure~\ref{fig:plotExperimentMeanVar}, show the time trajectory for the average and the second moment of the number of $\mathcal{M}$ molecules in the reversible dimerization model~\eqref{eq:reversible:dimerization}, as calculated for the zero information closure~\eqref{eq:finite:ODE} using Algorithm~\hyperlink{algo:1}{1}, for a second-order closure as well as a third-order closure. To solve the ODE~\eqref{eq:finite:ODE} we use an explicit Runge-Kutta (4,5) formula (ode45) built into MATLAB.
The results are compared to the average of $10^6$ SSA \cite{ref:Wilkinson-06} trajectories. It can be seen how increasing the order of the closure method improves the approximation accuracy.


\section{Approximate dynamic programming for constrained Markov decision processes} \label{sec:ADP}
In this section, we show that problem~\eqref{eq:main:problem} naturally appears in the context of approximate dynamic programming, which is at the heart of reinforcement learning (see \cite{ref:BerTsi-96, ref:BerDim-95, 2018arXiv180609460R} and references therein). We consider constrained Markov decision processes (MDPs) that form an important class of stochastic control problems with applications in many areas; see \cite{ref:Piunovskiy-97, ref:Altman-99} and the comprehensive bibliography therein. A look at these references shows that most of the literature concerns constrained MDPs where the state and action spaces are either finite or countable. Inspired by the recent work \cite{ref:MohSut-17}, we show here that the entropy maximization problem \eqref{eq:main:problem} is a key element to approximate constrained MDPs on general, possibly uncountable, state and action spaces.

\subsection{Constrained MDP problem formulation}
Consider a discrete-time constrained MDP $\big( S,A,\{A(s) : s\in S\},Q,c,d,\kappa \big),$
	where $S$ (resp.\ $A$) is a metric space called \emph{state space} (resp.\ \emph{action space}) and for each $s \in S$ the measurable set $A(s) \subseteq A$ denotes the set of \textit{feasible actions} when the system is in state $s\in S$. The \emph{transition law} is a stochastic kernel $Q$ on $S$ given the feasible state-action pairs in $\K:=\{(s,a):s\in S, a\in A(s)\}$. A stochastic kernel acts on real-valued measurable functions $u$ from the left as $Qu(s,a):= \int_{S}u(s') Q(\drv s'|s,a)$, for all $(s,a)\in \K,$ and on probability measures $\mu$ on $\K$ from the right as $\mu Q(B) := \int_{K}Q(B|s,a)\mu\big(\drv(s,a)\big)$, for all $B\in \mathcal{B}(S).$ The (measurable) function $c:\K \to\R_+$ denotes the so-called \emph{one-stage cost function}, and the (measurable) function $d:\K \to\R^q$ 
the one-stage constraint cost along with the preassigned budget level $\kappa\in\R^q$. 

In short, the MDP model described above may read as follows: When the system at the state~$s \in S$ deploys the action~$a \in A(s)$, it incurs the one-stage cost and constraint $c(s,a)$ and $d(s,a)$, respectively, and subsequently lands in the next state whose distribution is supported on $S$ and described via $Q(\cdot|s,a)$. 
We consider the expected long-run average cost criteria\footnote{We refer the interested reader to \cite{ref:MohSut-17} for extension to the discounted cost.}, i.e., for any measurable function $\psi:\K \to\R^p$ with $p\in\{1,q\}$, we define
\begin{equation*}
J_{\psi}(\pi,\nu)  := \limsup_{n\to\infty} \frac{1}{n} \mathds{E}^{\pi}_{\nu}\left( \sum_{t=0}^{n-1}\psi(s_{t},a_{t}) \right). 
\end{equation*} 
\noindent Using the defintion above, the central object of this section is the optimization problem
\begin{equation}\label{eq:CMDP}
J^\star: = \left\{ \begin{array}{rl}
\inf\limits_{\nu,\pi}	&J_{c}(\pi,\nu) \\
\text{s.t.}		&J_{d}(\pi,\nu) \leq \kappa \\
& \nu\in\mathcal{P}(S),\ \pi\in\Pi,
\end{array}\right.
\end{equation}
where $\Pi$ denotes the set of all control policies. We refer the reader to \cite{ref:Hernandez-96, ref:Hernandez-03, ref:Altman-99} for a detailed mathematical treatment of this setting. It is well known that the MDP problem~\eqref{eq:CMDP} can be stated equivalently as an infinite-dimensional linear program and its corresponding dual
\begin{subequations}
\begin{align}\label{eq:MDP:primal}
J_P^\star&: = \left\{ \begin{array}{rl}
\inf\limits_{\mu}	&\inprod{\mu}{c} \\
\text{s.t.}		&\mu(B\times A) = \mu Q(B) \quad \forall B\in\mathbb{B}(S)\\
& \inprod{\mu}{d}\leq \kappa \\
& \mu\in\mathcal{P}(\K),
\end{array}\right.\\
\label{eq:MDP:dual}
J_D^\star &: = \left\{ \begin{array}{rl}
\sup\limits_{u,\rho,\gamma}	&\rho - \gamma\transp \kappa \\
\text{s.t.}		&\rho + u(x)-Qu(x,a) \leq c(x,a) + \gamma\transp d(x,a)\quad \forall (x,a)\in\K\\
& \inprod{\mu}{d}\leq \kappa \\
& u\in, \rho\in\R, \gamma\in\R^q_+.
\end{array}\right.
\end{align}
\end{subequations}
The following regularity assumption is required in order to ensure that the solutions are well posed and that equivalence between \eqref{eq:CMDP} and the LPs \eqref{eq:MDP:primal}, \eqref{eq:MDP:dual} holds.
	\begin{myass}[Control model] \label{a:CM} We stipulate that
		\begin{enumerate}[(i)]
			\item \label{a:CM:K} the set of feasible state-action pairs is the unit hypercube $\K = [0,1]^{\dim(S\times A)}$;
			\item \label{a:CM:Q} the transition law $Q$ is Lipschitz continuous, i.e., there exists $L_Q>0$ such that for all $k, k'\in \K$ and all continuous functions $u$  
			$$|Qu(k) - Qu(k')| \leq L_Q \|u\|_\infty \|k - k'\|_{\ell_\infty};$$
			\item \label{a:CM:cost} the cost function $c$ is non-negative and Lipschitz continuous on $\K$ and $d$ is continuous on $\K$.
		\end{enumerate}
	\end{myass}
Under this assumption, strong duality between the linear programs \eqref{eq:MDP:primal} and \eqref{eq:MDP:dual} holds (i.e., the supremum and infimum are attained and $J_P^\star=J_D^\star$). Moreover, the LP formulation is equivalent to the original problem~\eqref{eq:CMDP} in the sense that $J^\star = J_P^\star=J_D^\star$, see \cite[Theorem~5.2]{ref:Hernandez-03}.
	
Finding exact solutions to either \eqref{eq:MDP:primal} or \eqref{eq:MDP:dual} generally is impossible as the linear programs are infinite dimensional. This challenge has given rise to a wealth of approximation schemes  in the literature under the names of approximate dynamic programming. Typically, one restricts decision space in \eqref{eq:MDP:dual} to a finite dimensional subspace spanned by basis functions $\{ u_i\}_{i=1}^n\subset \mathcal{L}(S)$ denoted by $\mathbb{U}_n:=\{\sum_{i=1}^n \alpha_i u_i \ : \ \|\alpha\|_2 \leq \theta\}$. Motivated by \cite{defarias:vanroy:04, ref:MohSut-17} we then approximate the solution $J^\star$ by 
\begin{equation} \label{eq:JPn}
J_{P,n}^\star: = \left\{ \begin{array}{rl}
\inf\limits_{\mu}	& \inprod{\mu}{c} +\theta \| \mathcal{T}_n \mu - e\|_2 \\
\text{s.t.}		& \inprod{\mu}{d}\leq \kappa \\
& \mu\in\mathcal{P}(\K),
\end{array}\right.
\end{equation}
where the operator $\mathcal{T}_n:\mathcal{P}(\K)\to\R^{n+1}$ is defined as $(\mathcal{T}_n \mu)_1 = -1, (\mathcal{T}_n \mu)_{i+1}=\inprod{Qu_i-u_i}{\mu}$,  $i=1,\hdots,n$ and $e:=(-1,0,\hdots,0)\in\R^{n+1}$. The optimization problem~\eqref{eq:JPn} can be solved with an accelerated first-order method provided by Algorithm~\hyperlink{algo:2}{2}, stated below, where each iteration step involves solving a problem an entropy maximization problem of the form~\eqref{eq:main:problem}. We define
\begin{align}
&c_{\alpha,\zeta}:=\frac{1}{\zeta}(c  -\alpha_0 + \sum_{i=1}^{n}\alpha_{i}(Qu_i-u_i)), \nonumber  &&\mathcal{Y}:=\{\mu\in \mathcal{P}(\K) \ : \ \inprod{\mu}{d}\leq \kappa\},  \nonumber \\
&\T\big(q,\alpha):= (\alpha - q)\min\{1,\theta \|q-\alpha\|_2^{-1}\}, &&\yeta(\alpha):=\arg \min\limits_{y\in\mathcal{Y}} \left\{ \KL{y}{\lambda} + \inprod{y}{c_{\alpha,\zeta}} \right\}.  \label{eq:entropymax}
\end{align}
 \begin{table}[!htb]
\centering 
\begin{tabular}{c}
  \Xhline{3\arrayrulewidth}  \hspace{1mm} \vspace{-3mm}\\ 
\hspace{0mm}{\bf{\hypertarget{algo:2}{Algorithm 2: } }} Approximate dynamic programming scheme \hspace{40.5mm} \\ \vspace{-3mm} \\ \hline \vspace{-0.5mm}
\end{tabular} \\
\vspace{-5mm}
 \begin{flushleft}
  {\hspace{3mm}{\bf{Input:}} $n, k \in\N$, $\zeta, \theta >0$, and $w^{(0)} \in \R^{n+1}$ such that $\|w^{(0)}\|_2\leq \theta$}
 \end{flushleft}
 \vspace{-6mm}
 \begin{flushleft}
  {\hspace{3mm}\bf{For $0\leq \ell\leq k$ do}}
 \end{flushleft}
 \vspace{-8mm}
 
  \begin{tabular}{l l}
\hspace{35mm}{\bf Step 1: } & \!\!\!\!Define $r^{(\ell)} :=\frac{\zeta}{4n} (e-\mathcal{T}_n \yeta(w^{(\ell)}))$ \\
\hspace{35mm}{\bf Step 2: } & \!\!\!\!Let  $z^{(\ell)} :=\T\big(\sum_{j=0}^{\ell} \frac{j+1}{2} r^{(j)}, 0\big)$ and $\beta^{(\ell)} =\T\big(r^{(\ell)}, w^{(\ell)}\big)$\\
\hspace{35mm}{\bf Step 3: } & \!\!\!\!Set $w^{(\ell+1)}=\frac{2}{\ell+3} z^{(\ell)} + \frac{\ell+1}{\ell+3} \beta^{(\ell)}$\\
  \end{tabular}
   \begin{flushleft}
  {\hspace{3mm}{\bf{Output:}} \quad $J^{(k)}_{n,\zeta} := \inprod{c}{\hat{y}_\zeta} + \theta \|\mathcal{T}_n \hat{y}_\zeta - e \|_{2} \quad \text{with}\quad \hat{y}_\zeta :=\sum_{j=0}^{k}\frac{2(j+1)}{(k+1)(k+2)} \yeta(w^{(j)})$}
 \end{flushleft}
   \begin{flushleft}
  \vspace{-15mm}
 \end{flushleft}  
\begin{tabular}{c}
\hspace{1mm} \phantom{ {\bf{Algorithm:}} Optimal Scheme for Smooth $\&$ Strongly Convex Optimization}\hspace{15mm} \\ \vspace{-1.0mm} \\\Xhline{3\arrayrulewidth}
\end{tabular}
\end{table}

Steps 2 and 3 of Algorithm~\hyperlink{algo:2}{2} are simple arithmetic operations. Step 1 relies on the solution to the optimization problem \eqref{eq:entropymax} that can be reduced to \eqref{eq:main:problem}. To see this, note that the additional linear term in the objective function can be directly integrated into the analysis of Section~\ref{sec:rate:distortion}. We provide the explicit construction in Section~\ref{section:inventory:management:system} for an inventory management system. 
We are interested in establishing bounds on the quality of the approximate solution $J_{n,\zeta}^{(k)}$ obtained by Algorithm~\hyperlink{algo:2}{2}. The results of \cite[Theorem~3.3 and 5.3]{ref:MohSut-17} allow us to obtain the following bound.

\begin{theorem}[Approximation error] \label{thm:approx:error:MDP}
Under Assumption~\ref{a:CM}, Algorithm~\hyperlink{algo:2}{2} provides an approximation to \eqref{eq:CMDP} with the following error bound
\begin{equation*}
|J_{n,\zeta}^{(k)}-J^\star| \leq (1 + \max\{L_Q, 1\}) \|u\opt - \proj_{\mathbb{U}_n}(u\opt)\|_L + \bigg( \frac{4n^2\theta}{k^2\zeta} + \dim(\K) \zeta \max\{\log\left( \beta /\zeta \right), 1\} \bigg),
\end{equation*}
where $\beta:=\frac{e}{\dim(\K)}(\theta \sqrt{n} (\max\{L_Q,1\}+1)+\|c\|_L)$.
\end{theorem}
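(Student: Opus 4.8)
The plan is to split the total error into a \emph{function-approximation} error, incurred by restricting the dual decision variable of \eqref{eq:MDP:dual} to the finite-dimensional subspace $\mathbb{U}_n$, and an \emph{algorithmic} error, incurred by solving the resulting program \eqref{eq:JPn} only approximately with the smoothed accelerated scheme of Algorithm~\hyperlink{algo:2}{2}. Writing $J_{P,n}^\star$ for the optimal value of \eqref{eq:JPn}, the triangle inequality gives
\begin{equation*}
|J_{n,\zeta}^{(k)}-J^\star| \leq |J_{P,n}^\star - J^\star| + |J_{n,\zeta}^{(k)} - J_{P,n}^\star|,
\end{equation*}
and I would bound the two summands separately, matching them to the two terms of the claimed estimate.

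For the first summand I would invoke \cite[Theorem~3.3]{ref:MohSut-17}. Under Assumption~\ref{a:CM} the LP pair \eqref{eq:MDP:primal}--\eqref{eq:MDP:dual} is strongly dual with $J^\star=J_D^\star$, and replacing the dual variable $u$ by its best approximation in $\mathbb{U}_n$ perturbs the optimal value by a controlled amount. The Lipschitz hypotheses \eqref{a:CM:Q} and \eqref{a:CM:cost} quantify how the Bellman-type constraint $\rho+u-Qu\le c+\gamma\transp d$ degrades under this projection: the transition law contributes a factor $L_Q$ through $\|Qu\opt-Q\proj_{\mathbb{U}_n}(u\opt)\|_\infty\le L_Q\|u\opt-\proj_{\mathbb{U}_n}(u\opt)\|$, while the direct term contributes a further unit, producing the prefactor $1+\max\{L_Q,1\}$ in front of $\|u\opt-\proj_{\mathbb{U}_n}(u\opt)\|_L$. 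This is exactly the first term of the bound.

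For the second summand I would appeal to \cite[Theorem~5.3]{ref:MohSut-17} and decompose it once more into a smoothing bias and an optimization residual. Each Step~1 of Algorithm~\hyperlink{algo:2}{2} solves the entropy-regularized subproblem \eqref{eq:entropymax}, an instance of \eqref{eq:main:problem} with the additional linear cost $c_{\alpha,\zeta}$; by Lemma~\ref{lem:entropy:max} its solution is the associated Gibbs measure, and the $\zeta$-scaled relative entropy acts as a strongly convex smoother of the support function appearing in \eqref{eq:JPn}. Since the entropy of a measure on the $\dim(\K)$-dimensional cube $\K=[0,1]^{\dim(S\times A)}$ is of order $\dim(\K)$ up to the range of the exponent, and since bounding $c_{\alpha,\zeta}$ over $\{\|\alpha\|_2\le\theta\}$ by Cauchy--Schwarz together with $\|Qu_i-u_i\|_L\le \max\{L_Q,1\}+1$ yields the constant $\beta$, this bias contributes the term $\dim(\K)\,\zeta\max\{\log(\beta/\zeta),1\}$. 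The remaining residual is governed by the $O(1/k^2)$ rate of the accelerated iteration (the weights $\tfrac{2}{\ell+3}$ and $\tfrac{\ell+1}{\ell+3}$ in Steps~2--3 are Nesterov's momentum coefficients): the smoothed objective has a gradient whose Lipschitz constant scales like $n/\zeta$ over a feasible radius $\theta$, so the standard estimate delivers the $\tfrac{4n^2\theta}{k^2\zeta}$ term.

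I expect the main obstacle to be the careful bookkeeping of constants when transcribing \cite[Theorems~3.3 and 5.3]{ref:MohSut-17} into the present notation: verifying that Assumption~\ref{a:CM} supplies precisely the hypotheses those theorems require, and that the dependence on the smoothing parameter $\zeta$ (which trades bias against the gradient Lipschitz constant) is tracked consistently so that the two pieces of the algorithmic error combine into the stated closed form. In particular, the dimension-dependent factors $\dim(\K)$ and $n$, and the norm bound defining $\beta$, must be made to emerge with the exact constants claimed; the conceptual steps are a routine adaptation of the cited results, while this constant-chasing is the delicate part.
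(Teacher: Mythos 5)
Your proposal is correct and takes essentially the same route as the paper: the paper gives no detailed proof, stating only that the bound follows from \cite[Theorems~3.3 and 5.3]{ref:MohSut-17}, which is exactly your decomposition --- Theorem~3.3 controlling the projection error $|J_{P,n}^\star - J^\star|$ (the term with prefactor $1+\max\{L_Q,1\}$) and Theorem~5.3 controlling the algorithmic error $|J_{n,\zeta}^{(k)} - J_{P,n}^\star|$ (the smoothing bias plus the accelerated $O(1/k^2)$ residual). Your account of how the constants $\beta$, $\dim(\K)$, and $\tfrac{4n^2\theta}{k^2\zeta}$ arise is consistent with the cited results, so the remaining work is indeed only the constant bookkeeping you identify.
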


Note that the bound depends on the projection residual $\|u\opt - \proj_{\mathbb{U}_n}(u\opt)\|_L$, where the projection mapping is defined as $\proj_{\mathbb{U}_n}(u\opt):=\arg\min_{u\in \mathbb{U}_n}\| u^\star - u\|_2$, the parameters of the problem (notably the dimensions of the state and action spaces, the stage cost, and the Lipschitz constant of the kernel $L_Q$), and the design choices of the algorithm (the number of basis functions $n$, norm bound $\theta$ and $\zeta$).

\begin{remark}[Tuning parameters] \
\begin{enumerate}[(i)]
\item The residual error $\|u\opt - \proj_{\mathbb{U}_n}(u\opt)\|_L$ can be approximated by leveraging results from the literature on universal function approximation. Prior information about the value function $u^\star$ may offer explicit quantitative bounds. For instance, for MDP under Assumption~\ref{a:CM} we know that $u^\star$ is Lipschitz continuous. For appropriate choice of basis functions, we can therefore ensure a convergence rate of ${n}^{-1/\dim(S)}$, see for instance \cite{ref:Farouki-12} for polynomials and \cite{ref:Olver-09} for the Fourier basis functions. 
\item The regularization parameter $\theta$ has to be chosen such that $\theta>\|c\|_L$. An optimal choice for $\theta$ and $\zeta$ is described in \cite[Remark~4.6 and Theorem~5.3]{ref:MohSut-17}.  
\end{enumerate}
\end{remark}

\subsection{Inventory management system} \label{section:inventory:management:system}

Consider an inventory model in which the state variable describes the stock level $s_t$ at the beginning of period $t$. The control or action variable $a_t$ at $t$ is the quantity ordered and immediately supplied at the beginning of period $t$, and the ``disturbance" or ``exogenous" variable $\xi_t$ is the demand during that period. We assume $\xi_t$ to be i.i.d. random variables following an exponential distribution with parameter $\lambda$.
The system equation \cite{ref:Hernandez-96} is
\begin{equation} \label{eq:inventory}
s_{t+1} = \max\{0,s_t + a_t - \xi_t\} =: (s_t + a_t - \xi_t)_+, \quad t=0,1,2, \hdots.
\end{equation}
We assume that the system has a finite capacity $C$. Therefore $S = A = [0, C]$ and since the current stock plus the amount ordered cannot exceed the system's capacity, the set of feasible actions is $A(s) = [0, C -s]$ for every $s\in S$.
Suppose we wish to maximize an expected profit for operating the system, we might take the net profit at stage $t$ to be
\begin{equation} \label{eq:cost:definition}
r(s_t, a_t, \xi_t):=v \min\{s_t + a_t,\xi_t\} - p a_t - h(s_t + a_t),
\end{equation}
which is of the form ``profit = sales - production cost - holding cost". In \eqref{eq:cost:definition}, $v$, $p$ and $h$ are positive numbers denoting unit sale price, unit production cost, and unit holding cost, respectively. To write the cost \eqref{eq:cost:definition} in the form of our control model \eqref{eq:CMDP}, we define
\begin{equation}
\begin{aligned}
c(s,a):=& \ \E{-r(s_t, a_t, \xi_t)| s_t=s, a_t=a}\\
=& \ -v (s+a) e^{ -\lambda(s+a) }-\frac{v}{\lambda}\left( 1- e^{ -\lambda(s+a) }\left( \lambda(s+a)+1 \right)\right) + p a + h(s + a).
\end{aligned}
\end{equation}
Note that non-negativity of the cost can be ensured by subtracting the term $2vC$ from \eqref{eq:cost:definition}.~We assume that there are regulatory constraints on the required stock level $s_t$, for example to avoid the risk of running into a shortage of a certain critical product. For simplicity, let us assume that the regulator enforces constraints on the long-term first and second moments of the stock $s_t$ in the following sense
\begin{align} \label{eq:bounds:inventory:regulator}
\limsup_{n\to\infty} \frac{1}{n} \mathds{E}^{\pi}_{\nu}\left( \sum_{t=0}^{n-1}s_t\right)\geq \ell_1, \quad \text{and} \quad  \limsup_{n\to\infty} \frac{1}{n} \mathds{E}^{\pi}_{\nu}\left( \sum_{t=0}^{n-1}s^2_t\right)\leq \ell_2, 
\end{align}
for given $\ell_1,\ell_2\in \R_+$, where we assume that $\ell_1^2<\ell_2$\footnote{Note that the enforced constraints \eqref{eq:bounds:inventory:regulator} imply the upper and lower bounds $\ell_1\leq \limsup_{n\to\infty} \frac{1}{n} \mathds{E}^{\pi}_{\nu}\left( \sum_{t=0}^{n-1}s_t\right)\leq \sqrt{\ell_2}$ and $\ell_1^2\leq \limsup_{n\to\infty} \frac{1}{n} \mathds{E}^{\pi}_{\nu}\left( \sum_{t=0}^{n-1}s^2_t\right)\leq \ell_2$.}. To express it in the form of our control model \eqref{eq:CMDP}, we define $d_1(s,a):= -s$, $d_2(s,a):=s^2$, and $\kappa:=(-\ell_1, \ell_2)\in\R^2$. From the described assumptions on the constants $\ell_1$ and $\ell_2$, it can be directly seen that the Slater point assumption described in Lemma~\ref{lem:zero:duality:gap} holds (consider the set $T:=\{ x\in\R^2 \ : \ x_1\geq \ell_1, x_2\leq \ell_2, x_1^2 \leq x_2 \}$).

In the following, we will describe how Algorithm~\hyperlink{algo:2}{2} and the respective error bounds given by Theorem~\ref{thm:approx:error:MDP} can be applied to the inventory management system. In particular we show how the computationally demanding part of Algorithm~\hyperlink{algo:2}{2}, given by \eqref{eq:entropymax}, is directly addressed by the methodology presented in Section~\ref{sec:rate:distortion}. To fulfill Assumption~\ref{a:CM}, we equivalently reformulate the above problem using the dynamics
\begin{equation}\label{eq:dynamics2}
s_{t+1} =  (\min\{C, s_t + a_t\} - \xi_t)_+, \quad t=0,1,2, \hdots,
\end{equation}
where  the admissible actions set is now the state-independent set $A=[0,C]$. Finally by normalizing the state and action variables through the definitions $\tilde{s}_t:=\frac{s_t}{C}$ and $\tilde{a}_t:=\frac{a_t}{C}$, we have
\begin{equation}\label{eq:dynamics2}
\tilde{s}_{t+1} =  \left(\min\{1, \tilde{s}_t + \tilde{a}_t\} - \frac{\xi_t}{C}\right)_+, \quad t=0,1,2, \hdots.
\end{equation}
Furthermore, it can be seen directly using Leibnitz' rule that the transition law is Lipschitz continuous and $L_Q\leq \sqrt{2}C\lambda$. It remains to argue how \eqref{eq:entropymax} can be addressed by the methodology presented in Section~\ref{sec:rate:distortion}. We introduce the linear operator $\mathcal{A}:\mathcal{P}(\K)\to \R^2$, defined by
$(\mathcal{A}\mu)_i:=\inprod{\mu}{d_i}$ for $i=1,2$. Then, the optimization problem \eqref{eq:entropymax} can be expressed as
\begin{align}
\text{(primal program)}: \quad J^{\star} &= \min\limits_{\mu\in\mathcal{P}(\K)} \Big \{  \KL{\mu}{\nu} -\inprod{\mu}{c_{\alpha,\zeta}} + \sup_{z\in\R^{M}}\left\{ \inprod{\mathcal{A}\mu}{z} - \sigma_{T}(z)\right\} \Big \}, \label{eq:primal:problem:inventory}\\
\text{(dual program)}: \quad J_{\mathsf{D}}^{\star} &= \sup_{z\in\R^{M}} \Big \{ - \sigma_{T}(z)  +   \min\limits_{\mu\in\mathcal{P}(\K)} \left\{ \KL{\mu}{\nu}  + \inprod{\mu}{\mathcal{A}^*z-c_{\alpha,\zeta}}\right\} \Big \} \, ,  \label{eq:dual:problem:inventory}
\end{align}
which apart from the additional linear term are in the form of \eqref{eq:primal:problem} and \eqref{eq:dual:problem}. Due to our assumptions on $\ell_1$ and $\ell_2$ described above, there exists a strictly feasible solution to \eqref{eq:entropymax}, i.e., $\mu_0\in\mathcal{P}(\K)$ such that $\mathcal{A}\mu_0\in T$ and $\delta:=\min_{y\in  T^c} \| \mathcal{A}\mu_0 - y\|_2 >0$. Hence, the method from Section~\ref{sec:rate:distortion} is applicable.

\textbf{Numerical Simulation.}
For a given set of model parameters ($C=1$, $\lambda=\frac{1}{2}$, $v=1$, $p=\frac{1}{2}$, $h=\frac{1}{10}$) we consider four different scenarios:
\begin{enumerate}[(i)]	
\item Unconstrained case (i.e., $\ell_1=0$ and $\ell_2=1$); \label{scenario:1}
\item $\ell_1=0.5$ and $\ell_2=0.4$; \label{scenario:2}
\item $\ell_1=0.5$ and $\ell_2=0.3$, which is strictly more constrained than scenario \eqref{scenario:2}; \label{scenario:3}
\item $\ell_1=0.1$ and $\ell_2=0.1$. \label{scenario:4}
\end{enumerate}

For each scenario we run Algorithm~\hyperlink{algo:2}{2} and plot the value of the resulting approximation $J_{n,\zeta}^{(k)}$ as a function of the number of iterations in Figure~\ref{fig:ADP}. In each iteration of Algorithm~\hyperlink{algo:2}{2} the variable \eqref{eq:entropymax} is required, which is computed with Algorithm~\hyperlink{algo:1}{1}. The following simulation parameters were used:
\begin{itemize}
  \setlength{\itemindent}{15mm}
\item[Algorithm~\hyperlink{algo:1}{1}:] $\eta_1=\eta_2 = 10^{-3}$, $1500$ iterations; 
\item[Algorithm~\hyperlink{algo:2}{2}:] $\zeta = 10^{-1.5}$, $\theta = 3$, $n=10$, Fourier basis $u_{2i-1}(s) = \frac{C}{2 i \pi}\cos\left(\frac{2 i \pi s}{C}\right)$ and $u_{2i}(s) = \frac{C}{2 i \pi}\sin\left(\frac{2 i \pi s}{C}\right)$ for $i=1,\hdots, \frac{n}{2}$.
\end{itemize}

As shown in Figure~\ref{fig:ADP}, the value of $J_{n,\zeta}^{(k)}$ converges at around $1000$ iterations of Algorithm~\hyperlink{algo:2}{2}.\footnote{1000 iterations  of Algorithm~\hyperlink{algo:2}{2} took around 5.5 hours with Matlab on a laptop with a 2.2 GHz Intel Core i7 processor.} The fact that scenario \eqref{scenario:2} is a (strict) relaxation in terms of the constraints compared to scenario \eqref{scenario:3} is visualized by the numerical simulation as the expected profit of scenario \eqref{scenario:2} is continuously higher compared to scenario \eqref{scenario:3}. Figure~\ref{fig:ADP} also indicates that in Scenario~\eqref{scenario:4} the constraints are the most restrictive.

\begin{figure}[!htb] 
\centering
   \input{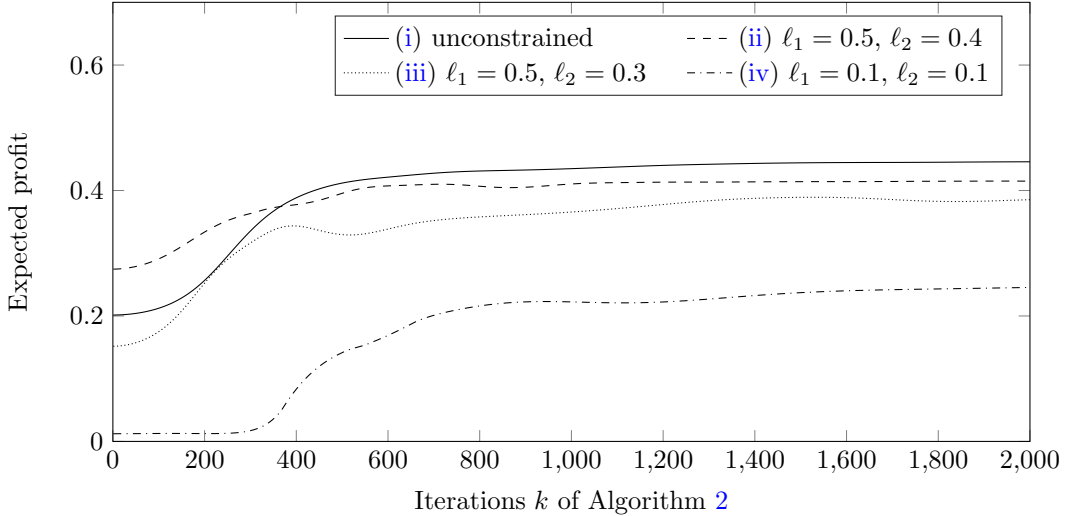} \label{fig:secondOrder} 
    \caption[]{The expected profit of the inventory system, approximated by $-J^{(k)}_{n,\zeta}$, resulting from Algorithm~\hyperlink{algo:2}{2} is displayed for four different scenarios \eqref{scenario:1}-\eqref{scenario:4} representing four different constraints on the inventory system.}
    \label{fig:ADP}
\end{figure}


 \section{Conclusion and future work} \label{sec:conclusion}
We presented an approximation scheme to a generalization of the classical problem of estimating a density via a maximum entropy criterion, given some moment constraints. The key idea used is to apply smoothing techniques to the non-smooth dual function of the entropy maximization problem, that enables us to solve the dual problem efficiently with fast gradient methods. Due to the favorable structure of the considered entropy maximization problem, we provide explicit error bounds on the approximation error as well as a-posteriori error estimates.

The proposed method requires one to evaluate the gradient \eqref{eq:gradient:approximation} in every iteration step, which, as highlighted in Section~\ref{sec:gradient:evalutaion}, in the infinite-dimensional setting involves an integral. As such the method used to compute those integrals has to be included to the complexity of the proposed algorithm and, in higher dimensions, may become is the dominant factor.~Therefore, it would be interesting to investigate this integration step in more detail.~Two approaches, one based on semidefinite programming and another invoking Quasi-Monte Carlo integration techniques, are briefly sketched. What remains open is to quantify the accuracy required in the gradient approximations, which could be done along the lines of \cite{ref:Devolder-13}.
Another potential direction, would be to test the proposed numerical method in the context of approximating the channel capacity of a large class of memoryless channels~\cite{TobiasSutter15}, as mentioned in the introduction.
 
 Finally it should be mentioned that the approximation scheme proposed in this article can be further generalized to quantum mechanical entropies. In this setup probability mass functions are replaced by density matrices (i.e., positive semidefinite matrices, whose trace is equal to one). The von Neumann entropy of such a density matrix $\rho$ is defined by $H(\rho):=-\mathrm{tr}(\rho \log \rho)$, which reduces to the (Shannon) entropy in case the density matrix $\rho$ is diagonal. Also the relative entropy can be generalized to the quantum setup~\cite{umegaki62} and general treatment of our approximation scheme, its analysis can be lifted to the this (strictly) more general framework. As demonstrated in~\cite{SSER16}, (quantum) entropy maximization problems can be used to efficiently approximate the classical capacity of quantum channels.

\begin{appendix}
\section{Detailed proof of Theorem~\ref{thm:main:result:inf:dim}} \label{app:detailed:proof}
Given the bounds $\| z^\star \|_2 \leq \frac{C}{\delta}$ and $ F(z) \leq J^\star \leq \KL{\mu_0}{\nu}=C$ for all $z \in \R^M$ as argued above, we show how the error bounds of Theorem~\ref{thm:main:result:inf:dim} follow from \cite{ref:devolder-12}. The dual $\varepsilon$-optimality \eqref{eq:thm:dual:optimality:cts} is derived from \cite[Equation~(7.8)]{ref:devolder-12}, that in our setting states
\begin{equation} \label{app:dual:optimality}
0\leq J^\star - F(\hat{z}_{k})\leq \frac{3\varepsilon}{4} + 5 \left(F(z^\star) - F(0)+\frac{\varepsilon}{2}\right)e^{-\frac{k}{2}\sqrt{\frac{\eta_2}{L(\eta)}}}.
\end{equation}
Using the parameters as defined in Theorem~\ref{thm:main:result:inf:dim}, i.e.,
\begin{align} \label{app:param}
 &C:=\KL{\mu_0}{\nu}, \qquad D :={1 \over 2} \max_{x \in T} \|x\|_2, \qquad \eta_{1}(\varepsilon) :=\frac{\varepsilon}{4D},  \qquad  \eta_{2}(\varepsilon) :=\frac{\varepsilon \delta^2}{2C^2},
 \end{align}
the fact that $F(z^\star) - F(0) = F(z^\star) \leq C$, and the Lipschitz constant $L(\eta) = \frac{1}{\eta_1}+\|\mathcal{A}\|^2 + \eta_2$ derived in Lemma~\ref{lem:lipschitz:cts:gradient}, inequality \eqref{app:dual:optimality} ensures $0\leq J^\star - F(\hat{z}_{k})\leq\varepsilon$
if 
\begin{equation*}
k\geq N_1(\varepsilon):=2 \left( \sqrt{\frac{8DC^2}{\varepsilon^2 \delta^2}+\frac{2\|\mathcal{A}\|^2 C^2}{\varepsilon \delta^2}+1}\right) \ln\left(\frac{10(\varepsilon +2C)}{\varepsilon}\right). \end{equation*}
The primal $\varepsilon$-optimality bound \eqref{eq:thm:primal:optimality:cts} following the derivations in \cite{ref:devolder-12} is implied if $k$ is chosen large enough such that $\|\nabla F_\eta(z_k)\|_2 \leq \frac{2\varepsilon \delta}{C}$. According to \cite[Equation~(7.11)]{ref:devolder-12} the gradient can be bounded by
\begin{equation}\label{app:primal:optimality}
\|\nabla F_\eta(z_k)\|_2 \leq \sqrt{4 L(\eta) \left( F(z^\star) - F(0) +\frac{\varepsilon}{2} \right)} e^{-\frac{k}{2}\sqrt{\frac{\eta_2}{L(\eta)}}} + 2\sqrt{3}\eta_2 \frac{C}{\delta}.
\end{equation}
Again using the parameters and constants as we did for the dual $\varepsilon$-optimality bound we find that $\|\nabla F_\eta(z_k)\|_2 \leq \frac{2\varepsilon \delta}{C}$ if
\begin{equation*}
k\!\geq \! N_2(\varepsilon)\!:=\!2\! \left(\! \sqrt{\frac{8DC^2}{\varepsilon^2 \delta^2}\!+\!\frac{2\|\mathcal{A}\|^2 C^2}{\varepsilon \delta^2}\!+\!1}\!\right)\! \ln\!\left(\! \frac{C}{\varepsilon \delta(2-\sqrt{3})}\sqrt{4\left( \frac{4D}{\varepsilon}\!+\!\|\mathcal{A}\|^2 \!+\! \frac{\varepsilon \delta^2}{2C^2} \right)\!\left( C \!+\!\frac{\varepsilon}{2} \right)} \right).
\end{equation*}
The primal $\varepsilon$-feasiblility \eqref{eq:thm:primal:feasibility:cts} finally directly follows from \cite[Equation~(7.14)]{ref:devolder-12} and the bound $\|z^\star\|_2 \leq \frac{C}{\delta}.$ 
\end{appendix}

\section*{Acknowledgments}
The authors thank Andreas Milias-Argeitis for helpful discussions regarding the moment closure example.
TS and JL acknowledge "SystemsX" under the grant "SignalX".
DS acknowledges support by the Swiss National Science Foundation (SNSF) via the National Centre of
Competence in Research "QSIT" and by the Air Force Office of Scientific Research (AFOSR) via
grant FA9550-16-1-0245.
PME was supported by the Swiss National Science Foundation under grant "P2EZP2 165264".

\vskip 0.2in
\bibliography{bibliofile}

\end{document}